\newcommand*\rowst{2}
\newcommand*\rowsa{6}
\newcommand*\rowsb{12}
\numberwithin{equation}{section}
\newtheorem{theorem}{Theorem}[section]
\newtheorem{corollary}[theorem]{Corollary}
\newtheorem{proposition}[theorem]{Proposition}
\newtheorem{conjecture}[theorem]{Conjecture}
\DeclareMathOperator{\lcm}{lcm}
\renewcommand{\a}{\mathbf{a}}
\begin{document}

\title[Unique maximal independent sets]{Unique maximum independent sets in graphs on monomials of a fixed degree}
\author[John Machacek]{John Machacek}
\address{
Department of Mathematics \& Computer Science\\ Hampden-Sydney College\\ Hampden-Sydney, VA 23943\\ USA
}
\email{jmachacek@hsc.edu}

\subjclass[2010]{Primary 05C69; Secondary 05E40}

\begin{abstract}
We consider graphs on monomials in $n$ variables of a fixed degree $d$ where two monomials are adjacent if and only if their least common multiple has degree $d+1$.
We prove that when $n = 3$ and $d$ is divisible by $3$ as well as when $n=4$ and $d$ is even that these graphs have a unique maximum independent set.
Domination in these graphs is also considered, and we conjecture that there is equality of the domination number and independent domination number in all cases.
\end{abstract}
\maketitle

\section{Introduction}
An \emph{independent set} (also known as a \emph{stable set}) in a graph is a set of vertices such that there is no edge between any two vertices in the set.
A \emph{maximum independent set} in a graph is an independent set of vertices of maximum possible size.
Finding and understanding maximum independent sets is an important problem from both graph theoretic and algorithmic perspectives.
Our main focus will be on demonstrating certain graphs have a unique maximum independent set.
We will also look at domination and independent domination.
In particular, we show some triangular grid graphs and tetrahedral grid graphs have a unique maximum independent set.
What we call triangular grid graphs and tetrahedral grid graphs are part of a larger family of graphs defined on monomials of a given degree in a polynomial ring.
We also conjecture for these graphs that the domination and independent domination number are equal.
Previous study of independence and domination in these graphs has been done from perspectives of commutative algebra~\cite{GGR1986} and hexagon chess~\cite{chessBound,chess}.

In this introduction we define this family of graphs that include triangular grid graphs and tetrahedral grid graphs.
Section~\ref{sec:triangle} contains our main result of triangular grid graphs, and in Section~\ref{sec:tetra} we prove our main result on tetrahedral grid graphs.
We conclude with Section~\ref{sec:conclusion} which discusses some connections to other work, domination, open problems, and directions for future research.

We now define the family of graphs which will be our focus.
Let $R_n = \mathbb{F}[x_1, x_2, \dots, x_n]$ be the polynomial ring in $n$ variables over the field $\mathbb{F}$.
Let $S_n(d)$ denote the set monomials of degree $d$ in $R_n$.
We then define the graph $G_n(d)$ to be the graph with vertex set $S_n(d)$ which has the edge $\{f,g\}$ for $f,g \in S_n(d)$ if and only if the degree of $\lcm(f,g)$ is $d+1$.
Equivalently, we can consider the graph with vertex set consisting of $n$-tuples of nonnegative integers summing to exactly $d$ where two such $n$-tuples are connected by an edge if and only if they are at Manhattan distance (i.e. $L_1$ distance) of $2$ from each other.
We choose to represent the vertices of these graphs as monomials because there is motivation from commutative algebra to study independent sets in these graphs.
When we take $n = 3$, the graph $G_3(d)$ is called a \emph{triangular grid graph}.
The graph $G_3(3)$ is depicted in Figure~\ref{fig:labeled} on the left with vertices labeled by monomials and on the right with vertices unlabeled.
With larger examples we will draw the graphs $G_n(d)$ without labeling the vertices.
For $n = 4$ we call $G_4(d)$ a \emph{tetrahedral grid graph}.

The \emph{independence number} $\alpha(G)$ of a graph $G$ is the maximum possible size of an independent set.
Thus a maximum independent set of $G$ is precisely an independent set of size $\alpha(G)$.
Let $G$ be a graph with vertex set $V$ and edge set $E$.
For $X \subseteq V$ we use $G|_X$ to denote the graph with vertex set $X$ and edge set $\{\{a,b\} \in E : a,b \in X\}$.
We also use $G \setminus X$ to denote $G|_{\overline{X}}$ where $\overline{X}$ is the complement of $X$ in $V$.

Let $\alpha_n(d) = \alpha(G_n(d))$ denote the independence number of the graph $G_n(d)$.
We will be interested in the quantity $\a_n(d)$ which we define to be the number of maximum independent sets of $G_n(d)$.
The numbers $\alpha_n(d)$ were studied by Geramita, Gregory, and Roberts in the context of monomial ideals~\cite{GGR1986}.
Work on computing the numbers $\alpha_n(d)$ by constructing simplicial complexes has been done by Carlini, H\`a, and Van Tuyl~\cite{EHVT2001}.
A connection between the numbers $\alpha_4(d)$ and sequence A053307 in Sloane's OEIS~\cite{OEIS} which involves certain $2 \times 2$ integer matrices has been found by Babcock and Van Tuyl~\cite{BVT2013}.

The quantity $\alpha_n(d)$ is referred to as the \emph{spreading number} since it records how much a collection of monomials in $S_n(d)$  can ``spread'' in $S_n(d+1)$ when each monomial is multiplied by each variable $x_1, x_2, \dots, x_n$.
That is, $\alpha_n(d)$ is the largest possible size of a subset $A \subseteq S_n(d)$ such that $n|A| = |S_n(1)\cdot A|$ where
\[S_n(1)\cdot A = \{ x_i f : f \in A, 1 \leq 1 \leq n\}\]
which hints at the connection to monomial ideals.
The precise use of the quantity $\alpha_n(d)$ in the work on ideal generation can be found in~\cite[Theorem 4.7]{GGR1986}.
The graphs $G_n(d)$ for $n = 3, 4$ have been further used in commutative algebra~\cite{28pts,Curtis,3and4}.

\begin{figure}
    \centering
    \begin{tikzpicture}[scale=0.75]
    \node (a) at ($2*(1,{sqrt(3)})$) {$x_1^2$};
    \node (b) at ($2*(0.5,{0.5*sqrt(3)})$) {$x_1 x_2$};
    \node (c) at ($2*(1.5,{0.5*sqrt(3)})$) {$x_1 x_3$};
    \node (d) at ($2*(0,0)$) {$x_2^2$};
    \node (e) at ($2*(1,0)$) {$x_2 x_3$};
    \node (f) at ($2*(2,0)$) {$x_3^2$};
    \node (x) at ($2*(-0.5,{-0.5*sqrt(3)})$) {$ x_2^3$};
    \node (y) at ($2*(0.5,{-0.5*sqrt(3)})$) {$x_2^2x_3$};
    \node (z) at ($2*(1.5,{-0.5*sqrt(3)})$) {$x_2 x_3^2$};
    \node (w) at ($2*(2.5,{-0.5*sqrt(3)})$) {$x_3^3$};
    
    \draw (a)--(b)--(c)--(a);
    \draw (b)--(d)--(e)--(b);
    \draw (c)--(f)--(e)--(c);
     \draw (x)--(y)--(z)--(w);
    \draw (x)--(d)--(y)--(e)--(z)--(f)--(w);

    \foreach \row in {0, 1, ...,\rowst} {
        \draw ($\row*(0.5, {0.5*sqrt(3)})$) + (7,0) -- ($(\rowst+7,0)+\row*(-0.5, {0.5*sqrt(3)})$);
        \draw ($\row*(1, 0)$) + (7,0) -- ($(\rowst/2+7,{\rowst/2*sqrt(3)})+\row*(0.5,{-0.5*sqrt(3)})$);
        \draw ($\row*(1, 0)$) + (7,0) -- ($(7,0)+\row*(0.5,{0.5*sqrt(3)})$);
    }

    \end{tikzpicture}
    \caption{The graph $G_3(3)$ both with vertices labeled by monomials and also unlabeled.}
    \label{fig:labeled}
\end{figure}
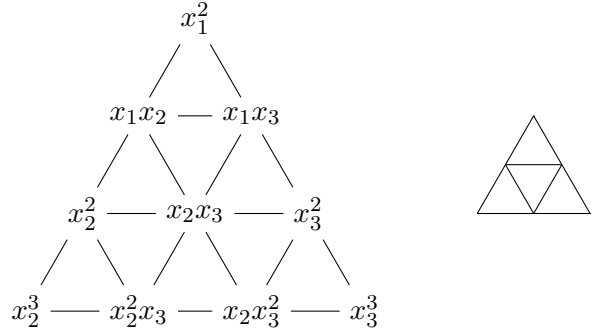

We will study these graphs from a graph theoretic perspective.
Our focus is on constructing maximum independent sets and showing when they are unique.
One motivation for our work is the following conjecture made by A. Howroyd in sequence A297557 of the OEIS~\cite{OEIS}.

\begin{conjecture}[A. Howroyd]
The sequence $\{\a_3(d)\}_{d \geq 9}$ is $3$-periodic with $\a_3(9) = 1$, $\a_3(10) = 27$, and $\a_3(11) = 27$.
That is, for $d \geq 9$ we have 
\[\a_3(d) =
\begin{cases}
1, & \text{if } d \equiv 0 \pmod{3};\\
27, & \text{if } d \not\equiv 0\pmod{3}.
\end{cases}\]
\label{conj:AH}
\end{conjecture}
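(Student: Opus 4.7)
The plan splits the conjecture by $d \bmod 3$. When $d \equiv 0 \pmod{3}$, the paper's main theorem on triangular grid graphs (stated in the abstract and proved in Section~\ref{sec:triangle}) directly yields $\a_3(d) = 1$, matching the conjecture for $d \geq 9$. The substantive remaining content is the claim $\a_3(d) = 27$ for $d \equiv 1,2 \pmod{3}$ and $d \geq 9$, which I would attack as a matching pair of bounds.

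For the lower bound $\a_3(d) \geq 27$, the factorization $27 = 3^3$ together with the $S_3$-symmetry of $G_3(d)$ permuting the three corners strongly suggests a construction with three independent ternary choices, one per corner. My plan is to exhibit a canonical maximum independent set whose deep interior matches (a translate of) the rigid lattice pattern forced by the main theorem, and then to show that in each of the three corner regions there are exactly three admissible local configurations that can be swapped in independently without changing the total size. Multiplying gives $3 \cdot 3 \cdot 3 = 27$ MIS, and the $S_3$-symmetry plus direct comparison should confirm that they are pairwise distinct.

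For the matching upper bound $\a_3(d) \leq 27$, I would peel. Given an arbitrary MIS $I$ of $G_3(d)$, first restrict to an induced sub-triangle where the main theorem applies: for example, removing the boundary layer $\{c = 0\}$ yields a copy of $G_3(d-1)$ inside $G_3(d)$, and when $d \equiv 1 \pmod{3}$ this smaller graph has $d-1 \equiv 0 \pmod{3}$, so its MIS is unique. One then shows that $I \cap \{c \geq 1\}$ must coincide with this unique MIS up to a bounded number of corner-local modifications, after which enumerating the compatible configurations on $\{c = 0\}$ (and analogously near the other two corners by symmetry) gives at most $27$ global MIS. The case $d \equiv 2 \pmod 3$ is handled similarly by peeling two layers, or by first reducing to $d \equiv 1$ via an auxiliary step.

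The hard part is the rigidity inside this peel. The restriction $I \cap \{c \geq 1\}$ need not itself be maximum in $G_3(d-1)$; it is only near-maximum, with its size deficit paid for by vertices of $I$ lying on the outer strip $\{c = 0\}$. Quantifying this trade-off and showing that the deficit must concentrate in three separate corner regions of bounded size — each admitting exactly three compatible extensions — is the technical heart of the argument, probably via a careful exchange or double-counting lemma. The hypothesis $d \geq 9$ likely enters here to guarantee enough interior room that the three corner regions are disjoint and no exceptional small-$d$ configurations intrude, which is consistent with the irregular values $\a_3(d)$ for small $d$ recorded in~\cite{OEIS}.
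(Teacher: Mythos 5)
There is a genuine gap here, and it is worth being precise about its nature: the statement you were asked to prove is labeled as a \emph{conjecture} in the paper, and the paper does not prove it. The only part the paper establishes is the case $d \equiv 0 \pmod{3}$, via Theorem~\ref{thm:triangle}, which shows $\a_3(d) = 1$ for $d \neq 6$ with $3 \mid d$; the paper explicitly says only that this theorem ``verifies Conjecture~\ref{conj:AH} for $d \equiv 0 \pmod 3$.'' The claim $\a_3(d) = 27$ for $d \not\equiv 0 \pmod 3$, $d \geq 9$, remains open. Your proposal correctly disposes of the $d \equiv 0$ case by citing the main theorem, but for the remaining cases it is a plan rather than a proof, and you acknowledge as much when you describe the rigidity step as ``the technical heart'' that would ``probably'' proceed via an exchange or double-counting lemma.

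Concretely, two things are missing. For the lower bound, you assert that each of the three corner regions admits exactly three admissible local configurations that can be swapped independently; this is a plausible reading of $27 = 3^3$ together with the $S_3$-symmetry, but you neither exhibit the three configurations nor verify that the swaps are independent and size-preserving, nor that the resulting $27$ sets are distinct and all maximum. For the upper bound, the peeling step requires showing that $I \cap \{c \geq 1\}$, which is only near-maximum in the copy of $G_3(d-1)$, must agree with the unique MIS of that subgraph away from three bounded corner regions. This is exactly the kind of deficit-localization argument that the paper's proof of Theorem~\ref{thm:triangle} carries out in the much easier rigid case (where the counting forces $s = k+1$, $t = k$ exactly and the matching-plus-independent-set structure on the boundary is unique); in the $27$-fold case the counting will not close up so tightly, and without the exchange lemma you cannot rule out deficits spread across the boundary in ways that produce more (or fewer) than $27$ extensions. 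Until that lemma is stated and proved, the conjecture remains a conjecture.
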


In Theorem~\ref{thm:triangle} we show that $\a_3(d) = 1$ when $d \geq 9$ and $d \equiv 0 \pmod{3}$.
This means that for such values of $d$ the graph $G_3(d)$ has a unique independent set of maximum size.
This theorem agrees with the more general Conjecture~\ref{conj:AH}.
In Theorem~\ref{thm:4} we show that if $d \equiv 0 \pmod{2}$, then $\a_4(d) = 1$.
We have neither a proven nor conjectural value of $\a_4(d)$ with $d \equiv 1 \pmod{2}$.
Analogous problems for larger values of $n$ are open and discussed in Section~\ref{sec:larger}.

\section{Triangular grid graphs}\label{sec:triangle}
Geramita, Gregory, and Roberts~\cite[Theorem 5.4 (2)]{GGR1986} have computed that
\[\alpha_3(d) = \left\lceil \frac{(d+2)(d+1)}{6} \right\rceil\]
for $d \geq 0$ and $d \not\in \{2,4\}$.
For $k \geq 3$ we see that
\begin{align*}
\alpha_3(3k) &= \left\lceil \frac{3k(3k+1)}{6} + k + \frac{1}{3} \right\rceil \\
\alpha_3(3k-1) &= \left\lceil\frac{3k(3k+1)}{6} \right\rceil\\
\alpha_3(3k-2) &= \left\lceil\frac{3k(3k+1)}{6}- k  \right\rceil\\
\alpha_3(3k-3) &= \left\lceil \frac{3k(3k+1)}{6} - 2k + \frac{1}{3} \right\rceil 
\end{align*}
and observe that
\[ \frac{3k(3k+1)}{6} \in \mathbb{Z} \]
since $3$ divides $3k$ and $2$ divides either $3k$ or $3k + 1$.
Thus,
\begin{align}
\alpha_3(3k) &= \alpha_3(3k-1) + k + 1 \label{eq:3k-1}\\
\alpha_3(3k) &= \alpha_3(3k-2) + 2k + 1 \label{eq:3k-2}\\
\alpha_3(3k) &= \alpha_3(3k-3) + 3k\label{eq:3k-3}
\end{align}
for $k \geq 3$.

\begin{theorem}
If $d \neq 6$ and $d \equiv 0 \pmod{3}$, then $\a_3(d) = 1$.
\label{thm:triangle}
\end{theorem}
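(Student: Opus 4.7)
The plan is to exhibit the unique maximum independent set explicitly and reduce uniqueness to a triangle-packing lemma. Take the candidate
\[\Omega := \{(a_1, a_2, a_3) \in S_3(3k) : a_1 \equiv a_2 \equiv a_3 \pmod 3\}.\]
The map $(a_1, a_2, a_3) \mapsto (a_1 - a_2) \bmod 3$ is a proper $3$-coloring of $G_3(3k)$, since every edge changes $a_1 - a_2$ by $\pm 1$ or $\pm 2$, never by $0 \pmod 3$; hence $\Omega$ is independent. Splitting by the common residue $r \in \{0, 1, 2\}$ and writing $a_i = 3a_i' + r$,
\[|\Omega| = \binom{k+2}{2} + \binom{k+1}{2} + \binom{k}{2} = \frac{3k^2 + 3k + 2}{2} = \alpha_3(3k),\]
so $\Omega$ is already a maximum independent set.

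For uniqueness I would establish the following lemma: for every $v \in \Omega$ the graph $G_3(3k) \setminus \{v\}$ admits a partition into $3k(k+1)/2$ vertex-disjoint triangles. Granting this, any independent set $I$ satisfies
\[|I| \leq 3k(k+1)/2 + 1 = \alpha_3(3k)\]
(at most one vertex of $I$ per triangle, plus possibly $v$), and equality forces $v \in I$. Applying the lemma to every $v \in \Omega$ forces $\Omega \subseteq I$, and since $|\Omega| = \alpha_3(3k)$ we conclude $I = \Omega$, giving $\a_3(3k) = 1$. The exclusion of $d = 6$ is consistent with this strategy: for $k = 2$ the lemma must fail at some $v \in \Omega$, matching the alternative ``all exponents even'' maximum independent set of $G_3(6)$.

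To prove the lemma I would induct on $k$, hand-verifying the base cases $k = 1$ and $k = 3$ and using the $S_3$-symmetry of $\Omega$ under coordinate permutation to reduce to a small set of orbit representatives. For the inductive step $k \geq 4$, identify the Inner region $\{a_1, a_2, a_3 \geq 1\}$ with $G_3(3k - 3)$ via the shift $(a_1, a_2, a_3) \mapsto (a_1 - 1, a_2 - 1, a_3 - 1)$, and split on whether $v$ lies in the Inner or on the Boundary. In the Inner case, $v - (1,1,1)$ lies in the analogous $\Omega$-set for $G_3(3k - 3)$, so induction furnishes a triangle partition of $\mathrm{Inner} \setminus \{v\}$; one then tiles the Boundary together with a reserved thin shell of inner vertices by a symmetric pattern of ``mixed'' triangles (typically of the form $\{(a, b, 0), (a-1, b, 1), (a, b-1, 1)\}$ and its rotations about the three corners). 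In the Boundary case, a construction symmetric about the side containing $v$ provides the partition.

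The main obstacle is this inductive interface construction. The Boundary alone contains only three vertex-disjoint triangles (the three corner ones), so essentially all of its coverage must come from mixed triangles, each requiring one Inner vertex. One must therefore set aside a suitable thin band of Inner vertices --- not used by the recursive inner packing --- to serve as the third vertex of each mixed boundary triangle. Showing that this ``shell carving'' can be carried out consistently with the inductive hypothesis (i.e.\ that after removing the reserved shell the leftover inner region still admits the recursive packing of $G_3(3k-3) \setminus \{v - (1,1,1)\}$), uniformly across all $v \in \Omega$, is where the bulk of the technical work is concentrated.
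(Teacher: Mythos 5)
Your candidate $\Omega$ is the right set, and your argument that it is a maximum independent set is correct and clean: $(a_1,a_2,a_3)\mapsto (a_1-a_2)\bmod 3$ is a proper $3$-coloring, $\Omega$ is its largest color class, and the count $\frac{3k^2+3k+2}{2}$ matches $\alpha_3(3k)$. The fatal problem is that your key lemma is \emph{false}, not merely left unproven. Note first that every triangle of $G_3(3k)$ is a unit triangle, i.e.\ of the form $\{w+e_1,w+e_2,w+e_3\}$ or $\{w+e_1+e_2,w+e_1+e_3,w+e_2+e_3\}$ (any other pair of difference vectors $e_i-e_j$ has a difference that is not again of that form). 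Now take $k=3$ and $v=(9,0,0)$, one of your own base cases. In $G_3(9)\setminus\{v\}$ each of $(8,1,0)$ and $(8,0,1)$ lies in exactly two triangles, and all of these contain $(7,1,1)$; hence $\{(8,1,0),(8,0,1),(7,1,1)\}$ is forced, after which $\{(7,0,2),(6,0,3),(6,1,2)\}$ and $\{(7,2,0),(6,3,0),(6,2,1)\}$ are forced, exhausting the row $a_1=6$. The five vertices with $a_1=5$ must then be covered using only the six vertices with $a_1=4$, which by counting forces two triangles meeting row $5$ twice and one meeting it once; this leaves exactly two nonadjacent vertices in row $4$, whose coverage is forced downward and leaves three consecutive uncovered vertices in row $3$. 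Covering those three from row $2$ necessarily consumes exactly three vertices of row $2$, leaving five; covering those five from row $1$ necessarily uses a triangle consuming an \emph{adjacent} pair of row-$1$ vertices; but the ten vertices of row $0$ force the set of row-$1$ vertices consumed from above to be exactly $\{(1,1,7),(1,3,5),(1,5,3),(1,7,1)\}$, which contains no adjacent pair. So $G_3(9)\setminus\{(9,0,0)\}$ admits no triangle partition. (The same forced start disposes of $k=2$ even faster: after three forced triangles one must self-tile the trapezoid with rows of sizes $5,6,7$, and the row-count identities for a three-row trapezoid force the top row to have even size.) The necessary coloring condition you identify is genuinely insufficient here, exactly as it already is for $G_3(2)$, whose three color classes are balanced yet which has no triangle partition.

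Consequently the implication ``every maximum independent set contains $v$,'' while true, cannot be certified by a triangle packing at the corner vertices, and the entire uniqueness argument collapses; the interface construction you defer is not just technical but impossible as stated. A repair would need a different certificate at least for boundary vertices of $\Omega$ (conceivably one could prove the packing lemma only for suitable interior $v$ and handle the boundary separately, but that is a new argument you have not begun). This is essentially where the paper goes instead: it never packs triangles, but compares $\alpha_3(d)$ with $\alpha_3(d-1)$ and $\alpha_3(d-2)$ to pin down exactly how many vertices of a maximum independent set lie in the two outermost rows along each side, shows this forces the boundary pattern $\{x_2^{d-3i}x_3^{3i}\}$, and then deletes the boundary and inducts on $G_3(d-3)$. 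Your description of $\Omega$ and your maximality proof would make a nice front end to that argument, but the triangle-packing route to uniqueness has to be abandoned.
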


\begin{proof}
The fact that $\a_3(0) = 1$, $\a_3(3)=1$, and $\a_3(9) = 1$ can by verified by direct computation.
Now take some $k > 3$, let $d = 3k$ and consider $G=G_3(d)$.
We define
\[X = \{x_2^ix_3^{d-i} : 0 \leq i \leq d\}\]
and
\[Y = \{x_1x_2^ix_3^{d-i-1} : 0 \leq i < d\}.\]
We let $G' = G \setminus X$ and $G'' = G' \setminus Y$.
We note that $G' \cong G_3(d-1)$ and $G'' \cong G_3(d-2)$.

Take a maximum size independent set $I$ in $G$ which must have $|I| = \alpha_3(d)$.
The size of $I$ restricted to $G'$ and $G''$ can be at most $\alpha_3(d-1)$ and $\alpha_3(d-2)$ respectively.
By Equations~(\ref{eq:3k-1}) and~(\ref{eq:3k-2}) it follows that $|I \cap X| \geq k+1$ and $|I \cap (X \cup Y)| \geq 2k+1$.
Observe that $|X| = 3k+1$ and $|Y| = 3k$.

Let $s = |I \cap X|$ and $t = |I \cap Y| $.
So, $s \geq k+1$ and $s + t \geq 2k+1$.
Every vertex of $X$ is adjacent to two vertices in $Y$ with the exception of $x_2^d$ and $x_3^d$ that are each adjacent to a single vertex in $Y$.
Furthermore, any two distinct vertices of $I \cap X$ will have no common neighbors in $Y$ because $I$ is an independent set.
It follows that $t \leq 3k - 2s +2$.
We then see that
\[2k+1 - s \leq 3k-2s+2\]
which implies $s \leq k+1$.
Thus we can conclude that $s = k+1$ and $t = k$.

Each vertex in $Y$ is in bijective correspondence with the edge connecting its two neighboring vertices which are in $X$. 
Since $|I \cap Y| = k$ and $|I \cap X| = k+1$, this means we need a matching of size $k$ inside $G|_X$ along with an independent set of size $k+1$ in $X$ disjoint from the matching.
This requires every vertex of $X$ to be covered either by an edge in the matching or to be vertex in the independent set.
We see that the only way to do this to take the vertices
\[\{x_2^{d-3i}x_3^{3i} : 0 \leq i \leq k\}\]
as the independent set along with the edges
\[\{\{x_2^{d-3i-1}x_3^{3i+1}, x_2^{d-3i-2}x_3^{3i+2}\} : 0 \leq i < k\}\]
as the matching.
It follows that $\{x_2^{d-3i}x_3^{3i} : 0 \leq i \leq k\} \subseteq I$.
An example of this configuration can be seen in the bottom two rows of the triangular grid graph in Figure~\ref{fig:G_3(12)}.

We could have chosen $X$ to be the vertices where the exponent of $x_i$ was zero for any $1 \leq i \leq 3$ and the same argument applies. It follows $J \subseteq I$ where
\[J = \{x_1^{d-3i}x_2^{3i} : 0 \leq i \leq k\} \cup \{x_1^{d-3i}x_3^{3i} : 0 \leq i \leq k\} \cup \{x_2^{d-3i}x_3^{3i} : 0 \leq i \leq k\}.\]
We see that $|J| = 3k$.
Consider the graph $G''' = G \setminus J$ and observe that $G''' \cong G_3(d-3)$.
Now we have
\[\alpha(G''') = \alpha_3(d-3) = \alpha(G) - 3k\]
by Equation~(\ref{eq:3k-3}).
By induction $G'''$ has a unique maximum independent set of size $\alpha_3(d-3)$.
Therefore, $I$ is the unique maximum independent set of $G$ and the theorem is proven. 
\end{proof}

In Figure~\ref{fig:six} we show the two possible maximum independent sets of $S_3(6)$ demonstrating that Theorem~\ref{thm:triangle} does not hold for $d=6$.
The unique maximum independent set of $G_3(12)$ is shown in Figure~\ref{fig:G_3(12)}, and from this illustration one can see the general pattern of the unique maximum independent set of $G_3(3k)$ for $k \geq 3$.
Theorem~\ref{thm:triangle} verifies Conjecture~\ref{conj:AH} for $d \equiv 0 \pmod{3}$.

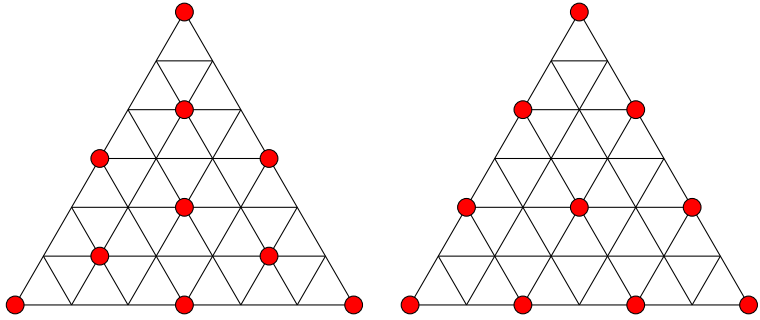
\begin{figure}
    \centering
    \begin{tikzpicture}[scale=0.75]
    \foreach \row in {0, 1, ...,\rowsa} {
        \draw ($\row*(0.5, {0.5*sqrt(3)})$) -- ($(\rowsa,0)+\row*(-0.5, {0.5*sqrt(3)})$);
        \draw ($\row*(1, 0)$) -- ($(\rowsa/2,{\rowsa/2*sqrt(3)})+\row*(0.5,{-0.5*sqrt(3)})$);
        \draw ($\row*(1, 0)$) -- ($(0,0)+\row*(0.5,{0.5*sqrt(3)})$);
    }
    
    \foreach \row in {0, 1, ...,\rowsa} {
        \draw ($\row*(0.5, {0.5*sqrt(3)})$) + (7,0) -- ($(\rowsa+7,0)+\row*(-0.5, {0.5*sqrt(3)})$);
        \draw ($\row*(1, 0)$) + (7,0) -- ($(\rowsa/2+7,{\rowsa/2*sqrt(3)})+\row*(0.5,{-0.5*sqrt(3)})$);
        \draw ($\row*(1, 0)$) + (7,0) -- ($(7,0)+\row*(0.5,{0.5*sqrt(3)})$);
    }
    
    \node[draw, circle, fill=red!, scale=0.65] at (0,0) {};
    \node[draw, circle, fill=red!, scale=0.65] at (3,0) {};
    \node[draw, circle, fill=red!, scale=0.65] at (6,0) {};
    
    \node[draw, circle, fill=red!, scale=0.65] at (1.5,{0.5*sqrt(3)}) {};
    \node[draw, circle, fill=red!, scale=0.65] at (4.5,{0.5*sqrt(3)}) {};

    \node[draw, circle, fill=red!, scale=0.65] at (3,{sqrt(3)}) {};

    \node[draw, circle, fill=red!, scale=0.65] at (1.5,{1.5*sqrt(3)}) {};
    \node[draw, circle, fill=red!, scale=0.65] at (4.5,{1.5*sqrt(3)}) {};
    
    \node[draw, circle, fill=red!, scale=0.65] at (3,{2*sqrt(3)}) {};
    
    \node[draw, circle, fill=red!, scale=0.65] at (3,{3*sqrt(3)}) {};
    
    \node[draw, circle, fill=red!, scale=0.65] at (7,0) {};
    \node[draw, circle, fill=red!, scale=0.65] at (9,0) {};
    \node[draw, circle, fill=red!, scale=0.65] at (11,0) {};
    \node[draw, circle, fill=red!, scale=0.65] at (13,0) {};
    
    \node[draw, circle, fill=red!, scale=0.65] at (8,{sqrt(3)}) {};
    \node[draw, circle, fill=red!, scale=0.65] at (10,{sqrt(3)}) {};
    \node[draw, circle, fill=red!, scale=0.65] at (12,{sqrt(3)}) {};
    
    \node[draw, circle, fill=red!, scale=0.65] at (9,{2*sqrt(3)}) {};
    \node[draw, circle, fill=red!, scale=0.65] at (11,{2*sqrt(3)}) {};
    
    \node[draw, circle, fill=red!, scale=0.65] at (10,{3*sqrt(3)}) {};
    \end{tikzpicture}
    \caption{The two maximum independent sets of $G_3(6)$ of size $\alpha_3(6) = 10$.}
    \label{fig:six}
\end{figure}

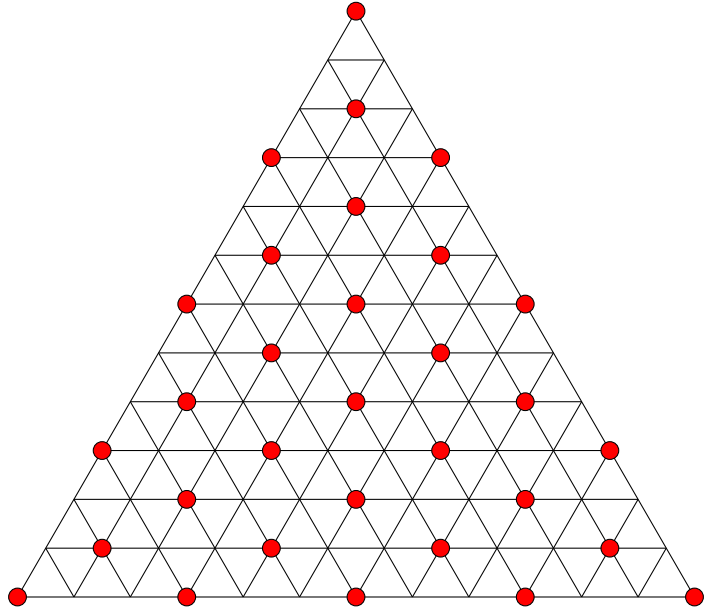
\begin{figure}
    \centering
    \begin{tikzpicture}[scale=0.75]
    \foreach \row in {0, 1, ...,\rowsb} {
        \draw ($\row*(0.5, {0.5*sqrt(3)})$) -- ($(\rowsb,0)+\row*(-0.5, {0.5*sqrt(3)})$);
        \draw ($\row*(1, 0)$) -- ($(\rowsb/2,{\rowsb/2*sqrt(3)})+\row*(0.5,{-0.5*sqrt(3)})$);
        \draw ($\row*(1, 0)$) -- ($(0,0)+\row*(0.5,{0.5*sqrt(3)})$);
    }
    \node[draw, circle, fill=red!, scale=0.65] at (0,0) {};
    \node[draw, circle, fill=red!, scale=0.65] at (3,0) {};
    \node[draw, circle, fill=red!, scale=0.65] at (6,0) {};
    \node[draw, circle, fill=red!, scale=0.65] at (9,0) {};
    \node[draw, circle, fill=red!, scale=0.65] at (12,0) {};
    
    \node[draw, circle, fill=red!, scale=0.65] at (1.5,{0.5*sqrt(3)}) {};
    \node[draw, circle, fill=red!, scale=0.65] at (4.5,{0.5*sqrt(3)}) {};
    \node[draw, circle, fill=red!, scale=0.65] at (7.5,{0.5*sqrt(3)}) {};
    \node[draw, circle, fill=red!, scale=0.65] at (10.5,{0.5*sqrt(3)}) {};
    
    \node[draw, circle, fill=red!, scale=0.65] at (3,{sqrt(3)}) {};
    \node[draw, circle, fill=red!, scale=0.65] at (6,{sqrt(3)}) {};
    \node[draw, circle, fill=red!, scale=0.65] at (9,{sqrt(3)}) {};
    
    \node[draw, circle, fill=red!, scale=0.65] at (1.5,{1.5*sqrt(3)}) {};
    \node[draw, circle, fill=red!, scale=0.65] at (4.5,{1.5*sqrt(3)}) {};
    \node[draw, circle, fill=red!, scale=0.65] at (7.5,{1.5*sqrt(3)}) {};
    \node[draw, circle, fill=red!, scale=0.65] at (10.5,{1.5*sqrt(3)}) {};
    
    \node[draw, circle, fill=red!, scale=0.65] at (3,{2*sqrt(3)}) {};
    \node[draw, circle, fill=red!, scale=0.65] at (6,{2*sqrt(3)}) {};
    \node[draw, circle, fill=red!, scale=0.65] at (9,{2*sqrt(3)}) {};
    
    \node[draw, circle, fill=red!, scale=0.65] at (4.5,{2.5*sqrt(3)}) {};
    \node[draw, circle, fill=red!, scale=0.65] at (7.5,{2.5*sqrt(3)}) {};
    
    \node[draw, circle, fill=red!, scale=0.65] at (3,{3*sqrt(3)}) {};
    \node[draw, circle, fill=red!, scale=0.65] at (6,{3*sqrt(3)}) {};
    \node[draw, circle, fill=red!, scale=0.65] at (9,{3*sqrt(3)}) {};
    
    \node[draw, circle, fill=red!, scale=0.65] at (4.5,{3.5*sqrt(3)}) {};
    \node[draw, circle, fill=red!, scale=0.65] at (7.5,{3.5*sqrt(3)}) {};
    
    \node[draw, circle, fill=red!, scale=0.65] at (6,{4*sqrt(3)}) {};
    
    \node[draw, circle, fill=red!, scale=0.65] at (4.5,{4.5*sqrt(3)}) {};
    \node[draw, circle, fill=red!, scale=0.65] at (7.5,{4.5*sqrt(3)}) {};
    
    \node[draw, circle, fill=red!, scale=0.65] at (6,{5*sqrt(3)}) {};
    
    \node[draw, circle, fill=red!, scale=0.65] at (6,{6*sqrt(3)}) {};
    \end{tikzpicture}
    \caption{The unique maximum independent set of $G_3(12)$ of size $\alpha_3(12) = 31$.}
    \label{fig:G_3(12)}
\end{figure}

\section{The tetrahedral grid graph}\label{sec:tetra}
Geramita, Gregory, and Roberts~\cite[Proposition 5.6]{GGR1986} have computed that
\[\alpha_4(2k+1) = \frac{(k+2)(2k+3)(k+1)}{6} \]
and also~\cite[Remarks 5.7 (ii)]{GGR1986} that
\[\alpha_4(2k) = \frac{(k+1)^3 + 2(k+1)}{3}\]
for any $k \geq 0$.
This means that
\begin{align}
    \alpha_4(2k) - \alpha_4(2k-1) &= \frac{(k+2)(k+1)}{2}\label{eq:41}\\
    \alpha_4(2k) - \alpha_4(2k-2) &= k^2 + k + 1\label{eq:42}
\end{align}
We will make use of these differences in the proof of Theorem~\ref{thm:4}.
Let us also record two computations which will be of use to us.
First let 
\[\epsilon_m = \begin{cases} 1 & m \equiv 0 \pmod{3} \\ 0 & \text{otherwise} \end{cases}\]
for any integer $m$.
Then it is the case that
\begin{equation}
\epsilon_m + \sum_{i=0}^{\lfloor \frac{m}{3} \rfloor} (3m - 9i) = \frac{(m+2)(m+1)}{2}
\label{eq:sum0}
\end{equation}
and
\begin{equation}
\epsilon_m + \sum_{i=1}^{\lfloor \frac{m}{3} \rfloor} (3m - 9i) = \frac{(m-2)(m-1)}{2}
\label{eq:sum1}
\end{equation}
both of which can verified considering cases $m \pmod{3}$.

\begin{figure}
    \centering
\begin{tikzpicture}[scale=0.75]
\node[draw, circle, fill=red!, scale=0.65] at (0,0) {};

\begin{scope}[shift={(1,0)}]
\newcommand*\rows{1}
\foreach \row in {0, 1, ...,\rows} {
        \draw ($\row*(0.5, {0.5*sqrt(3)})$) -- ($(\rows,0)+\row*(-0.5, {0.5*sqrt(3)})$);
        \draw ($\row*(1, 0)$) -- ($(\rows/2,{\rows/2*sqrt(3)})+\row*(0.5,{-0.5*sqrt(3)})$);
        \draw ($\row*(1, 0)$) -- ($(0,0)+\row*(0.5,{0.5*sqrt(3)})$);
    }
\draw[fill=gray!50] (1,0) -- (0.5,{0.5*sqrt(3)}) -- (0, 0) -- (1,0);
\end{scope}

\begin{scope}[shift={(3,0)}]
\newcommand*\rows{2}
\foreach \row in {0, 1, ...,\rows} {
        \draw ($\row*(0.5, {0.5*sqrt(3)})$) -- ($(\rows,0)+\row*(-0.5, {0.5*sqrt(3)})$);
        \draw ($\row*(1, 0)$) -- ($(\rows/2,{\rows/2*sqrt(3)})+\row*(0.5,{-0.5*sqrt(3)})$);
        \draw ($\row*(1, 0)$) -- ($(0,0)+\row*(0.5,{0.5*sqrt(3)})$);
    }
\node[draw, circle, fill=red!, scale=0.65] at (0,0) {};
\node[draw, circle, fill=red!, scale=0.65] at (2,0) {};
\node[draw, circle, fill=red!, scale=0.65] at (1,{sqrt(3)}) {};
\end{scope}

\begin{scope}[shift={(6,0)}]
\newcommand*\rows{3}
\foreach \row in {0, 1, ...,\rows} {
        \draw ($\row*(0.5, {0.5*sqrt(3)})$) -- ($(\rows,0)+\row*(-0.5, {0.5*sqrt(3)})$);
        \draw ($\row*(1, 0)$) -- ($(\rows/2,{\rows/2*sqrt(3)})+\row*(0.5,{-0.5*sqrt(3)})$);
        \draw ($\row*(1, 0)$) -- ($(0,0)+\row*(0.5,{0.5*sqrt(3)})$);
    }
   \draw[fill=gray!50] (1,0) -- (0.5,{0.5*sqrt(3)}) -- (0, 0) -- (1,0);
   \draw[fill=gray!50] (3,0) -- (2.5,{0.5*sqrt(3)}) -- (2, 0) -- (3,0);
   \draw[fill=gray!50] (2,{sqrt(3)}) -- (1.5,{1.5*sqrt(3)}) -- (1, {sqrt(3)}) -- (2,{sqrt(3)});
   \node[draw, circle, fill=red!, scale=0.65] at (1.5,{0.5*sqrt(3)}) {};
\end{scope}
    
\begin{scope}[shift={(10,0)}]
\newcommand*\rows{4}
\foreach \row in {0, 1, ...,\rows} {
        \draw ($\row*(0.5, {0.5*sqrt(3)})$) -- ($(\rows,0)+\row*(-0.5, {0.5*sqrt(3)})$);
        \draw ($\row*(1, 0)$) -- ($(\rows/2,{\rows/2*sqrt(3)})+\row*(0.5,{-0.5*sqrt(3)})$);
        \draw ($\row*(1, 0)$) -- ($(0,0)+\row*(0.5,{0.5*sqrt(3)})$);
    }
   \node[draw, circle, fill=red!, scale=0.65] at (0,0) {};
   \node[draw, circle, fill=red!, scale=0.65] at (2,0) {};
   \node[draw, circle, fill=red!, scale=0.65] at (4,0) {};
   \node[draw, circle, fill=red!, scale=0.65] at (1,{sqrt(3)}) {};
   \node[draw, circle, fill=red!, scale=0.65] at (3,{sqrt(3)}) {};
   \node[draw, circle, fill=red!, scale=0.65] at (2,{2*sqrt(3)}) {};
   \draw[fill=gray!50] (1.5,{0.5*sqrt(3)}) -- (2.5,{0.5*sqrt(3)}) -- (2, {sqrt(3)}) -- (1.5,{0.5*sqrt(3)});
\end{scope}
  
\begin{scope}[shift={(0,-6)}]
\newcommand*\rows{5}
\foreach \row in {0, 1, ...,\rows} {
        \draw ($\row*(0.5, {0.5*sqrt(3)})$) -- ($(\rows,0)+\row*(-0.5, {0.5*sqrt(3)})$);
        \draw ($\row*(1, 0)$) -- ($(\rows/2,{\rows/2*sqrt(3)})+\row*(0.5,{-0.5*sqrt(3)})$);
        \draw ($\row*(1, 0)$) -- ($(0,0)+\row*(0.5,{0.5*sqrt(3)})$);
    }
       \draw[fill=gray!50] (0,0) -- (1,0) -- (0.5, {0.5*sqrt(3)}) -- (0,0);
       \draw[fill=gray!50] (2,0) -- (3,0) -- (2.5, {0.5*sqrt(3)}) -- (2,0);
       \draw[fill=gray!50] (4,0) -- (5,0) -- (4.5, {0.5*sqrt(3)}) -- (4,0);
       \draw[fill=gray!50] (1,{sqrt(3)}) -- (2,{sqrt(3)}) -- (1.5, {1.5*sqrt(3)}) -- (1,{sqrt(3)});
       \draw[fill=gray!50] (3,{sqrt(3)}) -- (4,{sqrt(3)}) -- (3.5, {1.5*sqrt(3)}) -- (3,{sqrt(3)});
       \draw[fill=gray!50] (2,{2*sqrt(3)}) -- (3,{2*sqrt(3)}) -- (2.5, {2.5*sqrt(3)}) -- (2,{2*sqrt(3)});
        \node[draw, circle, fill=red!, scale=0.65] at (1.5,{0.5*sqrt(3)}) {};
        \node[draw, circle, fill=red!, scale=0.65] at (3.5,{0.5*sqrt(3)}) {};
         \node[draw, circle, fill=red!, scale=0.65] at (2.5,{1.5*sqrt(3)}) {};
 \end{scope}

\begin{scope}[shift={(6,-6)}]
\newcommand*\rows{6}
\foreach \row in {0, 1, ...,\rows} {
        \draw ($\row*(0.5, {0.5*sqrt(3)})$) -- ($(\rows,0)+\row*(-0.5, {0.5*sqrt(3)})$);
        \draw ($\row*(1, 0)$) -- ($(\rows/2,{\rows/2*sqrt(3)})+\row*(0.5,{-0.5*sqrt(3)})$);
        \draw ($\row*(1, 0)$) -- ($(0,0)+\row*(0.5,{0.5*sqrt(3)})$);
    }
        \node[draw, circle, fill=red!, scale=0.65] at (0,0) {};
        \node[draw, circle, fill=red!, scale=0.65] at (2,0) {};
        \node[draw, circle, fill=red!, scale=0.65] at (4,0) {};
        \node[draw, circle, fill=red!, scale=0.65] at (6,0) {};
        \node[draw, circle, fill=red!, scale=0.65] at (1,{sqrt(3)}) {};
        \node[draw, circle, fill=red!, scale=0.65] at (3,{sqrt(3)}) {};
        \node[draw, circle, fill=red!, scale=0.65] at (5,{sqrt(3)}) {};
        \node[draw, circle, fill=red!, scale=0.65] at (2,{2*sqrt(3)}) {};
        \node[draw, circle, fill=red!, scale=0.65] at (4,{2*sqrt(3)}) {};
         \node[draw, circle, fill=red!, scale=0.65] at (3,{3*sqrt(3)}) {};
         \draw[fill=gray!50] (1.5,{0.5*sqrt(3)}) -- (2.5,{0.5*sqrt(3)}) -- (2, {sqrt(3)}) -- (1.5,{0.5*sqrt(3)});
         \draw[fill=gray!50] (3.5,{0.5*sqrt(3)}) -- (4.5,{0.5*sqrt(3)}) -- (4, {sqrt(3)}) -- (3.5,{0.5*sqrt(3)});
         \draw[fill=gray!50] (2.5,{1.5*sqrt(3)}) -- (3.5,{1.5*sqrt(3)}) -- (3, {2*sqrt(3)}) -- (2.5,{1.5*sqrt(3)});
\end{scope}

\begin{scope}[shift={(-2,-14)}]
\newcommand*\rows{7}
\foreach \row in {0, 1, ...,\rows} {
        \draw ($\row*(0.5, {0.5*sqrt(3)})$) -- ($(\rows,0)+\row*(-0.5, {0.5*sqrt(3)})$);
        \draw ($\row*(1, 0)$) -- ($(\rows/2,{\rows/2*sqrt(3)})+\row*(0.5,{-0.5*sqrt(3)})$);
        \draw ($\row*(1, 0)$) -- ($(0,0)+\row*(0.5,{0.5*sqrt(3)})$);
    }
         \draw[fill=gray!50] (0,0) -- (1,0) -- (0.5, {0.5*sqrt(3)}) -- (0,0);
         \draw[fill=gray!50] (2,0) -- (3,0) -- (2.5, {0.5*sqrt(3)}) -- (2,0);
         \draw[fill=gray!50] (4,0) -- (5,0) -- (4.5, {0.5*sqrt(3)}) -- (4,0);
         \draw[fill=gray!50] (6,0) -- (7,0) -- (6.5, {0.5*sqrt(3)}) -- (6,0);
         \draw[fill=gray!50] (1,{sqrt(3)}) -- (2,{sqrt(3)}) -- (1.5, {1.5*sqrt(3)}) -- (1,{sqrt(3)});
         \draw[fill=gray!50] (3,{sqrt(3)}) -- (4,{sqrt(3)}) -- (3.5, {1.5*sqrt(3)}) -- (3,{sqrt(3)});
         \draw[fill=gray!50] (5,{sqrt(3)}) -- (6,{sqrt(3)}) -- (5.5, {1.5*sqrt(3)}) -- (5,{sqrt(3)});
          \draw[fill=gray!50] (2,{2*sqrt(3)}) -- (3,{2*sqrt(3)}) -- (2.5, {2.5*sqrt(3)}) -- (2,{2*sqrt(3)});
          \draw[fill=gray!50] (4,{2*sqrt(3)}) -- (5,{2*sqrt(3)}) -- (4.5, {2.5*sqrt(3)}) -- (4,{2*sqrt(3)});
          \draw[fill=gray!50] (3,{3*sqrt(3)}) -- (4,{3*sqrt(3)}) -- (3.5, {3.5*sqrt(3)}) -- (3,{3*sqrt(3)});
          \node[draw, circle, fill=red!, scale=0.65] at (1.5,{0.5*sqrt(3)}) {};
          \node[draw, circle, fill=red!, scale=0.65] at (3.5,{0.5*sqrt(3)}) {};
          \node[draw, circle, fill=red!, scale=0.65] at (5.5,{0.5*sqrt(3)}) {};
          \node[draw, circle, fill=red!, scale=0.65] at (2.5,{1.5*sqrt(3)}) {};
          \node[draw, circle, fill=red!, scale=0.65] at (4.5,{1.5*sqrt(3)}) {};
          \node[draw, circle, fill=red!, scale=0.65] at (3.5,{2.5*sqrt(3)}) {};
\end{scope}

\begin{scope}[shift={(6,-14)}]
\newcommand*\rows{8}
\foreach \row in {0, 1, ...,\rows} {
        \draw ($\row*(0.5, {0.5*sqrt(3)})$) -- ($(\rows,0)+\row*(-0.5, {0.5*sqrt(3)})$);
        \draw ($\row*(1, 0)$) -- ($(\rows/2,{\rows/2*sqrt(3)})+\row*(0.5,{-0.5*sqrt(3)})$);
        \draw ($\row*(1, 0)$) -- ($(0,0)+\row*(0.5,{0.5*sqrt(3)})$);
    }
            \node[draw, circle, fill=red!, scale=0.65] at (0,0) {};
        \node[draw, circle, fill=red!, scale=0.65] at (2,0) {};
        \node[draw, circle, fill=red!, scale=0.65] at (4,0) {};
        \node[draw, circle, fill=red!, scale=0.65] at (6,0) {};
        \node[draw, circle, fill=red!, scale=0.65] at (8,0) {};
        \node[draw, circle, fill=red!, scale=0.65] at (1,{sqrt(3)}) {};
        \node[draw, circle, fill=red!, scale=0.65] at (3,{sqrt(3)}) {};
        \node[draw, circle, fill=red!, scale=0.65] at (5,{sqrt(3)}) {};
        \node[draw, circle, fill=red!, scale=0.65] at (7,{sqrt(3)}) {};
        \node[draw, circle, fill=red!, scale=0.65] at (2,{2*sqrt(3)}) {};
        \node[draw, circle, fill=red!, scale=0.65] at (4,{2*sqrt(3)}) {};
        \node[draw, circle, fill=red!, scale=0.65] at (6,{2*sqrt(3)}) {};
         \node[draw, circle, fill=red!, scale=0.65] at (3,{3*sqrt(3)}) {};
         \node[draw, circle, fill=red!, scale=0.65] at (5,{3*sqrt(3)}) {};
         \node[draw, circle, fill=red!, scale=0.65] at (4,{4*sqrt(3)}) {};
         \draw[fill=gray!50] (1.5,{0.5*sqrt(3)}) -- (2.5,{0.5*sqrt(3)}) -- (2, {sqrt(3)}) -- (1.5,{0.5*sqrt(3)});
         \draw[fill=gray!50] (3.5,{0.5*sqrt(3)}) -- (4.5,{0.5*sqrt(3)}) -- (4, {sqrt(3)}) -- (3.5,{0.5*sqrt(3)});
         \draw[fill=gray!50] (5.5,{0.5*sqrt(3)}) -- (6.5,{0.5*sqrt(3)}) -- (6, {sqrt(3)}) -- (5.5,{0.5*sqrt(3)});
         \draw[fill=gray!50] (2.5,{1.5*sqrt(3)}) -- (3.5,{1.5*sqrt(3)}) -- (3, {2*sqrt(3)}) -- (2.5,{1.5*sqrt(3)});
        \draw[fill=gray!50] (4.5,{1.5*sqrt(3)}) -- (5.5,{1.5*sqrt(3)}) -- (5, {2*sqrt(3)}) -- (4.5,{1.5*sqrt(3)});
        \draw[fill=gray!50] (3.5,{2.5*sqrt(3)}) -- (4.5,{2.5*sqrt(3)}) -- (4, {3*sqrt(3)}) -- (3.5,{2.5*sqrt(3)});
\end{scope}

\end{tikzpicture}
\caption{The unique maximal independent set $G_4(8)$ of size $\alpha_4(8) = 45$ shown partitioned into slices.}
\label{fig:slices}
\end{figure}
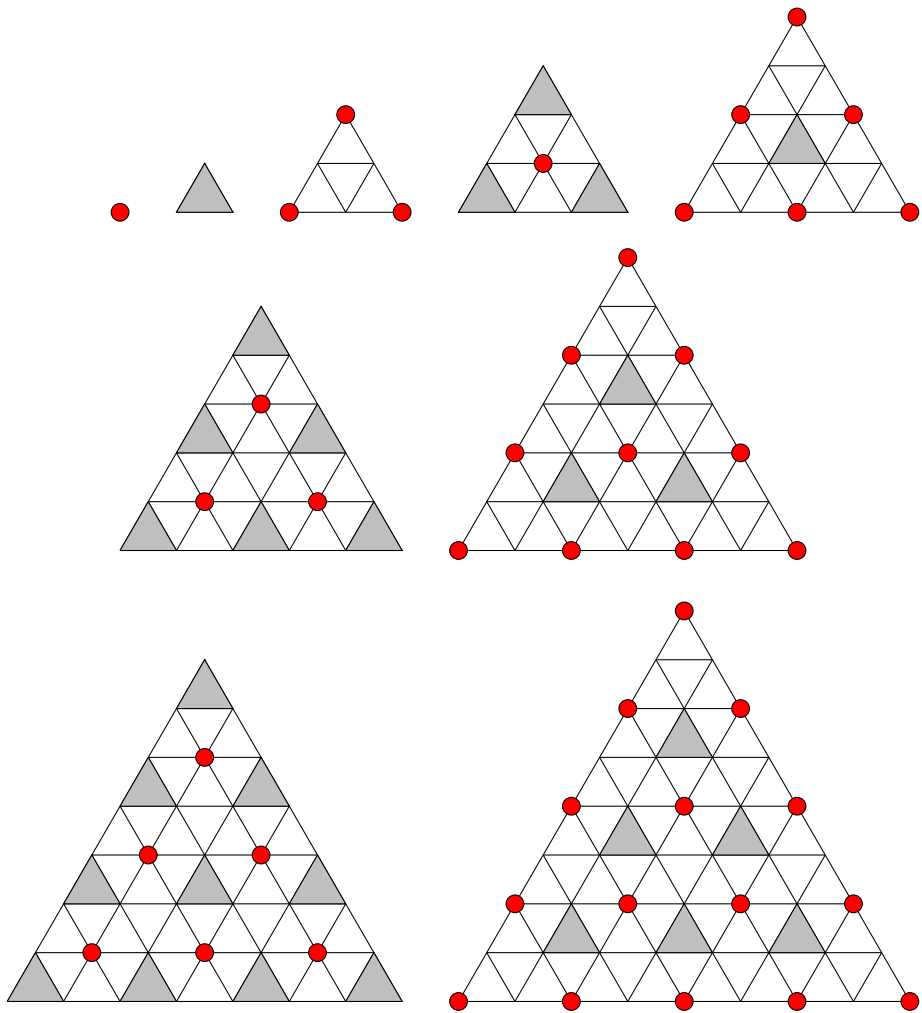

\begin{theorem}
If $d \equiv 0 \pmod{2}$, then $\a_4(d) = 1$.
\label{thm:4}
\end{theorem}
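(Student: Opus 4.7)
The plan is induction on $d = 2k$, with the small cases $d \in \{0, 2, 4, 6\}$ checked by direct computation. The candidate unique maximum independent set is
\[
I^{*} = \{x_1^{a_1}x_2^{a_2}x_3^{a_3}x_4^{a_4} : a_1+a_2+a_3+a_4 = d \text{ and } a_1 \equiv a_2 \equiv a_3 \equiv a_4 \pmod{2}\},
\]
namely the monomials whose four exponents are either all even or all odd. Any two distinct members of $I^{*}$ differ by a vector whose four entries have the same parity and sum to zero; the $L_1$-norm of any non-zero such vector is at least $4$, so $I^{*}$ is independent. A direct count splitting $I^{*}$ by common parity gives
\[
|I^{*}| = \binom{k+3}{3} + \binom{k+1}{3} = \frac{(k+1)^{3}+2(k+1)}{3} = \alpha_{4}(2k).
\]

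For uniqueness I would slice $G = G_4(d)$ by the $x_4$-exponent: let $T_j$ denote the set of monomials with $x_4^j$, so $G|_{T_j} \cong G_3(d-j)$, $G \setminus T_0 \cong G_4(d-1)$, and $G \setminus (T_0 \cup T_1) \cong G_4(d-2)$. Fix a maximum independent set $I$ and write $s = |I \cap T_0|$, $t = |I \cap T_1|$. Equations~(\ref{eq:41}) and~(\ref{eq:42}) give $s \geq \binom{k+2}{2}$ and $s+t \geq k^{2}+k+1 = \binom{k+2}{2} + \binom{k}{2}$. The key structural observation is that each vertex $v \in T_1$ has exactly three $T_0$-neighbors, the monomials obtained from $v$ by replacing its unique $x_4$ factor with $x_1$, $x_2$, or $x_3$, and these three neighbors form a small upward triangle in $T_0 \cong G_3(d)$. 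Since $I$ is independent, each $v \in I \cap T_1$ contributes an upward triangle in $T_0$ none of whose vertices lie in $I \cap T_0$. A double counting paralleling the matching argument of Theorem~\ref{thm:triangle}, organised via the concentric shell decomposition of $G_3(d)$ encoded in~(\ref{eq:sum0}) and~(\ref{eq:sum1}), should force $s = \binom{k+2}{2}$ and $t = \binom{k}{2}$ and identify $I \cap T_0$ as the all-even monomials and $I \cap T_1$ as the all-odd monomials on those two slices. Equation~(\ref{eq:42}) then yields $|I \cap (G \setminus (T_0 \cup T_1))| = \alpha_4(d-2)$, and the induction hypothesis on $G_4(d-2)$ completes the proof.

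The main obstacle is forcing the configuration on $T_0 \cup T_1$. For $d \geq 10$ with $d \not\equiv 0 \pmod{3}$ one has $\alpha_3(d) > \binom{k+2}{2}$, so $I \cap T_0$ need not be a maximum independent set of $T_0$ and Theorem~\ref{thm:triangle} is not directly available; the upward-triangle constraint coming from $T_1$ must be used to single out the all-even pattern. I anticipate the identities~(\ref{eq:sum0}) and~(\ref{eq:sum1}), whose summands $3(d-3i)$ are exactly the sizes of the concentric triangular boundaries of $G_3(d)$, to supply the shell-by-shell bookkeeping required, since the triangles forbidden in $T_0$ by vertices of $I \cap T_1$ are naturally partitioned by these shells.
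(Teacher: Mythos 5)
Your setup mirrors the paper's: slice $G_4(d)$ by the exponent of one variable, use Equations~(\ref{eq:41}) and~(\ref{eq:42}) to bound $|I\cap T_0|$ and $|I\cap T_0|+|I\cap T_1|$ from below, decompose each triangular slice into concentric cycles as in~(\ref{eq:sum0}) and~(\ref{eq:sum1}), and finish by induction on $d-2$. Your explicit description of the extremal set as the monomials with all exponents of equal parity, together with the verification that it is independent and has size $\binom{k+3}{3}+\binom{k+1}{3}=\alpha_4(2k)$, is a genuinely nice addition that the paper states only implicitly through its slice-by-slice description and figures. However, the proof as written has a real gap exactly where you acknowledge one. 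First, you never establish an \emph{upper} bound on $s=|I\cap T_0|$: the inequalities $s\geq\binom{k+2}{2}$ and $s+t\geq k^2+k+1$ alone do not force $s=\binom{k+2}{2}$ and $t=\binom{k}{2}$, since a priori $I\cap T_0$ could be larger (indeed $\alpha_3(d)>\binom{k+2}{2}$ for most $d$, as you note). The paper closes this by counting in the direction opposite to your triangle observation: each vertex of $I\cap T_0$ has $3$ neighbors in $T_1$ except for at most $3k+3$ boundary savings, and these neighbor sets are pairwise disjoint, giving $t\leq|T_1|-3s+3k+3$; combined with $t\geq k^2+k+1-s$ this pins down $s$ and $t$. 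Second, the step ``should force \dots and identify $I\cap T_0$ as the all-even monomials'' is the entire content of the uniqueness claim and is not carried out; ``I anticipate'' the shell identities will supply the bookkeeping is a plan, not an argument.

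It is also worth noting that the paper orders the induction differently from your sketch, and this ordering matters for closing your gap. Rather than first forcing the configuration on $T_0\cup T_1$ from scratch and then invoking the induction hypothesis on $G\setminus(T_0\cup T_1)\cong G_4(d-2)$, the paper applies the induction hypothesis \emph{first}: since $|I\cap(T_0\cup T_1)|=k^2+k+1=\alpha_4(d)-\alpha_4(d-2)$, the restriction of $I$ to $G_4(d-2)$ is a maximum independent set there and hence is the unique known configuration. The known pattern on the slice $Z$ with $x_4$-exponent $2$ then constrains which vertices of $T_1$ are available, and together with $|I\cap T_1|=\binom{k}{2}$ and the cycle decomposition this forces $I\cap T_1$; the now-known $I\cap T_1$ and $|I\cap T_0|=\binom{k+2}{2}$ then force $I\cap T_0$. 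Reorganizing your argument this way would let you avoid having to rule out, unaided, the many near-maximum independent sets of $G_3(d)$ of size $\binom{k+2}{2}$ inside $T_0$.
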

\begin{proof}
It is clear that $\a_4(0) = 1$.
Let $G = G_4(d)$ where $d = 2k$ for some $k > 0$.
Now let $X$ be the set of vertices where  the power of $x_1$ is zero, and let $Y$ be the set of vertices where the power of $x_1$ is one.
We then let $G' = G \setminus X$ and $G'' = G \setminus Y$.
It is then case that $G' \cong G_4(d-1)$ and $G'' \cong G_4(d-2)$.
Take a maximum size independent set $I$ in $G$ which will have $|I| = \alpha_4(d)$.
The size of $I$ restricted to $G'$ and $G''$ can be at most $\alpha_4(d-1)$ and $\alpha_4(d-2)$ respectively.
We let $s = |I \cap X|$ and $t = |I \cap Y|$.
We have that
\begin{align*}
    |X| &= \frac{(2k+2)(2k+1)}{2}\\
    |Y| &= \frac{(2k+1)(2k)}{2}
\end{align*}
while $G|_X \cong G_3(d)$ and $G|_Y \cong G_3(d-1)$.

Using Equations~(\ref{eq:41}) and~(\ref{eq:42}) we find that
\begin{align*}
    s &\geq \frac{(k+2)(k+1)}{2}\\
    s+t &\geq k^2 + k + 1
\end{align*}
from which it follows 
\begin{equation}
t \geq k^2 + k + 1 - s.
\label{eq:A}
\end{equation}
We also have that 
\begin{equation}
t \leq |Y| - 3s + 3k + 3 = 2k^2 + 4k + 3 - 3s
\label{eq:B}
\end{equation}
by considering the adjacency of vertices in $X$ with vertices in $Y$.
Let us explain this bound.
Since $I$ is an independent set any two distinct vertices in $X \cap I$ will have no common neighbors in $Y$.
Consider a vertex $x_2^{e_2} x_3^{e_3} x_4^{e_4} \in X$.
If $e_2, e_3, e_4 > 0$, then this vertex has three neighbors in $Y$.
If exactly one of $e_2$, $e_3$, or $e_4$ is zero, then this vertex has two neighbors in $Y$.
If exactly one of $e_2$, $e_3$, or $e_4$ is nonzero, then this vertex has only one neighbor in $Y$.
The set of vertices in $X$ with at least one of $e_2$, $e_3$ or $e_4$ equal to zero is a cycle of size $3d = 6k$.
Assume $X \cap I$ contains $3k$ vertices $x_2^{e_2} x_3^{e_3} x_4^{e_4}$ with at least one of $e_2$, $e_3$, or $e_4$ equal zero including each of $x_2^d$, $x_3^d$, and $x_4^d$.
In this case the $s$ vertices of $X \cap I$ in total have $3s - 3k - 3$ neighbors in $Y$.
Furthermore, this is the fewest possible neighbors in $Y$ we can have since $I$ is an independent set.
Thus the bound claimed bound on $t$ follows.

Using Equations~(\ref{eq:A}) and~(\ref{eq:B}) we find that
\[s = \frac{(k+2)(k+1)}{2}\]
since we obtain a upper bound on $s$ matching our lower bound.
We can then conclude that
\[t = \frac{k(k-1)}{2}\]
and we have found a requirement on the sizes of both $X \cap I$ and $Y \cap I$.
It remains to show the required sizes of $X \cap I$ and $Y \cap I$ force a unique maximum independent set.
We will now argue that the only possible maximum independent set follows the pattern in Figure~\ref{fig:slices} which shows that maximum independent set in ``slices'' depending of the exponent of $x_1$.

For any $a$ the graph $G_3(a)$ can be partitioned into a disjoint union of cycles of lengths $\{3a - 9i : 0 \leq i \leq \lfloor \frac{a}{3} \rfloor\}$ where we have an isolated vertex for the cycle of length $0$ in the case the $3$ divides $a$.
Here the cycle of length $3a - 9i$ or the isolated vertex is the induced subgraph on the vertices 
\[\{x_2^{e_2}x_3^{e_3}x_4^{e_4} : e_j \geq i \text{ for all } 2 \leq j \leq 4 \text{ and } e_j = i \text{ for some } 2 \leq j \leq 4\}.\]
We have that $G|_X \cong G_3(d)$ and $G|_Y \cong G_3(d-1)$ where $d = 2k$.
We claim the only option for $X \cap I$ is to take an independent set of size
\[\frac{3(2k) - 9(2i)}{2} = 3k - 9i\]
in the cycle of length $3d - 9(2i)$ along with the additional vertex if $3$ divides $d$.
Moreover, these independent sets in the cycles must include $x_2^d x_3^i x_4^i$ which determines the independent set uniquely.
Similarly, we claim the only option for $Y \cap I$ is to take an independent set of size
\[\frac{3(2k-1) - 9(2i+1)}{2} = 3(k+1) - 9(i+1)\]
in the cycle of length $3(d-1) - 9(2i+1)$ along with the additional vertex if $3$ divides $d-1$.
Moreover, these independent sets in the cycles must include $x_2^{(d-1)-(i+1)}x_3^{i+1}x_4^{i+1}$ which determines the independent set uniquely.
Equations~(\ref{eq:sum0})~and~(\ref{eq:sum1}) that the count of vertices in each of $X \cap I$ and $Y \cap I$ is correct.

Since we can partition $G_4(d)$ into ``slices'' isomorphic to $G_3(0), \dots, G_3(d)$ where $G_3(a)$ consists of vertices where the power of $x_1$ is $d-a$ we can see these claimed descriptions of $X \cap I$ and $Y \cap I$ by induction.
For the base case we have the $G_4(0)$ is a single vertex.
So, assume $d = 2k \geq 2$.
Similar to our definition of $X$ and $Y$, let $Z$ be the subset of vertices so that $G_4(d)|_Z \cong G_3(d-2)$.
By induction $Z \cap I$ has the form we claimed above.
This implies $Y \cap I$ has the desired form since $|Y \cap I| = \frac{k(k-1)}{2}$ and the configuration of vertices in $Z \cap I$ restricts the possibility for $Y \cap I$.
In the same way $X \cap I$ is as we claimed since $|X \cap I| = \frac{(k+2)(k+1)}{2}$ and the configuration of $Y \cap I$ is known.

Figure~\ref{fig:slices} shows the unique independent set constructed for $G_4(8)$ partitioned into slices.
The vertices in red are part of the independent set and the triangles shaded in gray are adjacent to vertices from another slice in the independent set.
\end{proof}

\section{Concluding remarks}\label{sec:conclusion}

\subsection{Periodicity and larger values of $n$} 
\label{sec:larger}
We now give two conjectures in the spirit of Conjecture~\ref{conj:AH} , Theorem~\ref{thm:triangle}, and Theorem~\ref{thm:4}.
The first conjecture deals with unique maximum independent sets.
The second conjecture deals with periodicity of the sequences $\{\a_n(d)\}_{d \geq 0}$.

\begin{conjecture}
For any $n \geq 1$ there exists some $N \geq 0$ such that $\a_n(d) = 1$ whenever $d \equiv 0 \pmod{n}$ and $d \geq N$.
\label{conj:1}
\end{conjecture}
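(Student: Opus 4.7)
The plan is a double induction mimicking the slicing technique from the proofs of Theorems~\ref{thm:triangle} and~\ref{thm:4}. We induct on $n$, using those theorems as the base cases $n=3,4$, and for each fixed $n$ we induct on $k$ where $d = nk$.

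The first step is a preliminary one: obtain an expression for $\alpha_n(d)$ extending the Geramita--Gregory--Roberts formulas~\cite{GGR1986}, at least when $d \equiv 0 \pmod n$, and exhibit a canonical ``balanced'' independent set of this size. The natural candidate is the independent set invariant under the $S_n$-action permuting variables, generalizing the patterns visible in Figures~\ref{fig:G_3(12)} and~\ref{fig:slices}. This set will serve as the target unique maximum independent set $J$. Once $J$ is constructed, showing $|J| = \alpha_n(d)$ amounts to proving a matching upper bound, presumably via a linear-programming-type averaging argument over the slices introduced below.

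The second step is the slicing. Partition $V(G_n(d))$ into slices $S_0, S_1, \dots, S_d$, where $S_j$ consists of monomials in which $x_1$ has exponent $j$, so that $G_n(d)|_{S_j} \cong G_{n-1}(d-j)$. For an arbitrary maximum independent set $I$ in $G_n(d)$, combine the per-slice upper bound $|I \cap S_j| \leq \alpha_{n-1}(d-j)$ with the bipartite adjacency between consecutive slices (the analog of Equations~(\ref{eq:3k-1})--(\ref{eq:3k-3}) and~(\ref{eq:41})--(\ref{eq:42})) to force the equalities $|I \cap S_j| = \alpha_{n-1}(d-j)$ for every $j$. Then $I \cap S_j$ is a maximum independent set of the slice. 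The third step is the meshing argument: for slices with $d-j \equiv 0 \pmod{n-1}$ (and $d-j$ large enough) the inductive hypothesis pins $I \cap S_j$ to the unique symmetric configuration, and one must then show that for every other slice only one of the possibly many maximum independent sets of $G_{n-1}(d-j)$ is compatible, via the edges connecting $S_j$ to $S_{j\pm 1}$, with the configurations already determined in the neighboring slices.

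The main obstacle is precisely this meshing step. What the inductive hypothesis provides, uniqueness only in the residue class $d-j \equiv 0 \pmod{n-1}$, is genuinely weaker than what the meshing seems to require: for most $j$ one has $d-j \not\equiv 0 \pmod{n-1}$, so one needs a structural description of \emph{all} maximum independent sets of $G_{n-1}(d-j)$, not just a uniqueness statement in a single residue class. In effect one is pushed to prove a stronger statement than Conjecture~\ref{conj:1} along the way, tracking the full family of maximum independent sets in each $G_{n-1}(a)$ and classifying which ones are extendable. For $n=4$ this extra information was available essentially for free, because the slices $G_3(a)$ decompose into cycles whose maximum independent sets are well understood; for general $n$ no such decomposition is available and this is where a new idea will be needed. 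A secondary obstacle is that the small-$d$ exceptional cases --- like $d=6$ in Theorem~\ref{thm:triangle} and $d \in \{2,4\}$ in~\cite{GGR1986} --- almost certainly recur for each $n$ and dictate the threshold $N$, so pinning down $N$ explicitly for each $n$ will likely require finite computer search in addition to the inductive argument above.
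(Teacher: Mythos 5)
This statement is Conjecture~\ref{conj:1}; the paper does not prove it. It is established only for $n \le 4$ (via Theorems~\ref{thm:triangle} and~\ref{thm:4} and the trivial cases $n=1,2$) and is explicitly left open for $n \ge 5$, with computational evidence for $n=5$ extending only to $d=4$, i.e.\ not even one nontrivial multiple of $5$. So there is no proof in the paper to compare against, and your proposal --- which you candidly present as a plan with an unresolved ``meshing'' obstacle --- does not close the gap. The obstacle you name is real: the inductive hypothesis gives uniqueness only for slices $G_{n-1}(d-j)$ with $d-j \equiv 0 \pmod{n-1}$, whereas the argument needs a classification of \emph{all} maximum (or near-maximum) independent sets in every slice; for $n=4$ this came for free from the cycle decomposition of $G_3(a)$, and no analogue is known for $n \ge 5$. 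A further prerequisite you underweight: step one requires a closed form for $\alpha_n(d)$, which \cite{GGR1986} provides only for $n \le 4$; without it the slice inequalities cannot even be written down.

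There is also a concrete error in your step two. The equalities $|I \cap S_j| = \alpha_{n-1}(d-j)$ for \emph{every} $j$ are false already in the cases the paper does prove. In Theorem~\ref{thm:4} with $d = 2k$, the proof forces $|I \cap S_0| = \tfrac{(k+2)(k+1)}{2} = \alpha_3(d)$ but $|I \cap S_1| = \tfrac{k(k-1)}{2}$, which for $k = 4$ equals $6$ while $\alpha_3(7) = 12$; similarly in Theorem~\ref{thm:triangle} the bottom row carries $k+1$ vertices of $I$ while the row, a path on $3k+1$ vertices, has independence number about $3k/2$. What is actually forced is a trade-off between consecutive slices (dense slices alternating with sparse ones), derived from the differences in Equations~(\ref{eq:3k-1})--(\ref{eq:3k-3}) and~(\ref{eq:41})--(\ref{eq:42}) together with a count of cross-slice neighbours. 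Any correct generalization must reproduce this alternating structure rather than maximality in each slice, which makes the required bookkeeping substantially more delicate than your outline suggests.
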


\begin{conjecture}
For any $n \geq 1$ there exists some $N \geq 0$ such that the sequence $\{\a_n(d)\}_{d \geq N}$ is $n$-periodic.
\label{conj:period}
\end{conjecture}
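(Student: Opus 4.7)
The plan is to prove Conjecture~\ref{conj:period} together with Conjecture~\ref{conj:1} by a double induction on $n$ and $d$, extending the peeling strategy used in Theorems~\ref{thm:triangle} and~\ref{thm:4}. The key observation is that multiplication by $x_1 x_2 \cdots x_n$ induces an isomorphism between $G_n(d-n)$ and the induced subgraph of $G_n(d)$ on the \emph{interior} $I_n(d)$, consisting of monomials all of whose exponents are positive. Dually, the boundary $B_n(d) = V(G_n(d)) \setminus I_n(d)$ is the union of $n$ faces $F_1, \ldots, F_n$, each isomorphic to $G_{n-1}(d)$ and sharing lower-dimensional subfaces.

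First I would derive closed-form recurrences $\alpha_n(d) - \alpha_n(d-n) = P_n(d)$ generalizing Equation~(\ref{eq:3k-3}) and the analogous formulas for $n=4$, where $P_n(d)$ is a polynomial in $d$ whose value matches the largest independent set size achievable within $B_n(d)$ compatibly with an interior MIS. Next, for $d$ sufficiently large, I would argue by a bound-matching computation, modeled on the derivations $s=k+1$, $t=k$ in the proof of Theorem~\ref{thm:triangle} and $s=(k+2)(k+1)/2$, $t=k(k-1)/2$ in the proof of Theorem~\ref{thm:4}, that any maximum independent set $I$ of $G_n(d)$ must satisfy $|I \cap I_n(d)| = \alpha_n(d-n)$. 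This forces $I \cap I_n(d)$ to be a maximum independent set of $G_n(d-n)$, while $I \cap B_n(d)$ must be a maximum independent set of an appropriate boundary-compatible subgraph.

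For the periodicity itself, the induction hypothesis applied to Conjecture~\ref{conj:1} gives uniqueness of the interior MIS when $d \equiv 0 \pmod n$ is sufficiently large; I would then argue, using (\ref{eq:sum0})--(\ref{eq:sum1})-style identities one dimension down, that the boundary is also uniquely forced, giving $\a_n(d) = 1$ at residue $0$. For residues $r \not\equiv 0 \pmod n$, the number of valid boundary configurations is governed by MIS counts in $G_{n-1}(d)$, which by the outer induction on $n$ are eventually $(n-1)$-periodic, together with the gluing constraints along shared subfaces. The goal is to prove that the total count depends only on $d \bmod n$, so that $\a_n(d+n) = \a_n(d)$ for all $d \geq N$.

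The main obstacle is this last reconciliation: the faces are not independent because they share vertices along codimension $\geq 2$ subfaces, and the forced interior MIS couples them further. For small $n$ these compatibilities can be unpacked by hand, as in the slice decomposition behind Figure~\ref{fig:slices}, but for general $n$ a cleaner argument may need to exploit the $\mathbb{Z}/n\mathbb{Z}$ symmetry cyclically permuting the variables, or to find a combinatorial invariant whose natural period collapses from $\lcm(n,n-1)$ down to $n$. Packaging Conjectures~\ref{conj:1} and~\ref{conj:period} into a single simultaneous induction appears essential, since each step of the boundary analysis relies on having a unique interior MIS at the lower degree.
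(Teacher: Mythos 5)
The statement you are addressing is Conjecture~\ref{conj:period}, which the paper leaves open: it is verified only in the trivial cases $n=1,2$, and for $n=3$ it is exactly Howroyd's Conjecture~\ref{conj:AH}, of which the paper proves only the residue-$0$ part (Theorem~\ref{thm:triangle}). There is therefore no proof in the paper to compare against, and your proposal does not supply one either; it is a research program whose decisive steps are asserted by analogy rather than carried out. Three gaps are concrete. First, your recurrence $\alpha_n(d)-\alpha_n(d-n)=P_n(d)$ presupposes a closed formula for $\alpha_n(d)$; such formulas are known (and cited in the paper) only for $n\leq 4$, so the very first ingredient is unavailable for general $n$. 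Second, the bound-matching step that is supposed to force $|I\cap I_n(d)|=\alpha_n(d-n)$ is modeled on the proofs of Theorems~\ref{thm:triangle} and~\ref{thm:4}, but those arguments peel one codimension-one slice at a time and rest on delicate neighbor counts between adjacent slices (e.g.\ $t\leq 3k-2s+2$ and Equation~(\ref{eq:B})); you give no reason the analogous inequalities remain tight when the whole boundary, a union of $n$ overlapping facets, is treated at once.

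Third, and most importantly, the quantity you must show depends only on $d\bmod n$ --- the number of admissible boundary configurations at nonzero residues --- is precisely the open content of the conjecture. For $n=3$ this is the unproven claim that $\a_3(d)=27$ when $d\not\equiv 0\pmod 3$; your proposal would need to produce that number from a count of maximum independent sets of $G_2(d)$ glued along shared subfaces, and no such computation is attempted. You identify this reconciliation yourself as ``the main obstacle'' and offer only candidate ideas (cyclic symmetry, an invariant collapsing the period from $\lcm(n,n-1)$ to $n$) for overcoming it. Until that step is executed, the proposal proves nothing beyond what Theorems~\ref{thm:triangle} and~\ref{thm:4} already establish, namely the residue-$0$ uniqueness for $n=3,4$. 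The simultaneous induction coupling Conjectures~\ref{conj:1} and~\ref{conj:period} is a sensible framing, consistent in spirit with the paper's slice decompositions, but as written this is a plan for a proof, not a proof.
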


It is easy to see Conjecture~\ref{conj:1} and Conjecture~\ref{conj:period} both hold when $n=1$ and $n=2$.
Since $G_1(d)$ is always just a single point we find that  $\a_1(d) = 1$ for all $d \geq 0$.
We see that $\a_2(2k) = 1$ and $\a_2(2k+1) = 2$ for all $k \geq 0$ since $G_2(d)$ is the path graph on $d+1$ vertices.
Theorem~\ref{thm:triangle} and Theorem~\ref{thm:4} affirm Conjecture~\ref{conj:1} for $n = 3$ and $n = 4$ respectively.
Howroyd's Conjecture~\ref{conj:AH} is exactly Conjecture~\ref{conj:period} for $n = 3$ with $N = 9$.

The sequence $\{\a_4(d)\}_{d \geq 0}$ begins
\[1, 4, 1, 80, 1, 944, 1, \dots\]
while the sequence $\{\a_5(d)\}_{d \geq 0}$ begins
\[1,5,1,705,5, \dots\]
which is as far as we could compute in each sequence.
Further computation for $n \geq 4$ would be valuable in testing both conjectures in this section.
Such computation seems difficult and could be interesting in its own right.

In order to prove a result similar to Theorem~\ref{thm:triangle} or Theorem~\ref{thm:4} for the next case of $n=5$ it would be very helpful to have a formula for $\alpha_5(d)$ similar to the known formula for both $\alpha_3(d)$ and $\alpha_4(d)$.
However, we are not aware of any such formula.
The values of the sequence  $\{\alpha_5(d)\}_{d \geq 0}$ are
\[1, 5, 7, 16, 26, \dots\]
and computation of these numbers quickly becomes difficult.

It is easy to compute $\a_n(d)$ for any $n \geq 1$ with $d \in \{0,1,2\}$.
The following gives the values of $a_n(0)$, $a_n(1)$, and $a_n(2)$ for any $n$.

\begin{proposition}
If $n \geq 0$, then $\a_n(0) = 1$, $\a_n(1) = n$, and $\a_n(2) = 1$.
\end{proposition}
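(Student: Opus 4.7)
The three assertions concern very small cases, so my plan is to handle each value of $d$ separately by direct analysis of the graph $G_n(d)$.

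The case $d=0$ is immediate: $S_n(0) = \{1\}$, so $G_n(0)$ is a single isolated vertex whose only maximum independent set is $\{1\}$. For $d=1$, I would observe that $S_n(1) = \{x_1, \dots, x_n\}$, and for any $i \neq j$ we have $\deg \lcm(x_i,x_j) = 2 = d+1$, so $G_n(1)$ is the complete graph $K_n$. Its independence number is $1$, and the maximum independent sets are precisely the $n$ singletons $\{x_i\}$, yielding $\a_n(1)=n$.

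The case $d=2$ is the only one requiring a short argument. My plan is to split the vertex set of $G_n(2)$ into pure squares $\{x_i^2 : 1 \le i \le n\}$ and mixed products $\{x_ix_j : i < j\}$, and to identify the adjacencies from the Manhattan-distance-$2$ description: two pure squares are never adjacent; $x_i^2$ is adjacent to $x_j x_k$ iff $i \in \{j,k\}$; and $x_i x_j$ is adjacent to $x_k x_l$ iff $|\{i,j\} \cap \{k,l\}| = 1$. In particular the pure squares form an independent set of size $n$, so $\alpha_n(2) \ge n$.

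For the upper bound and uniqueness, let $I$ be any independent set, set $A = \{i : x_i^2 \in I\}$, and view the mixed products in $I$ as an edge set $M \subseteq \binom{[n]}{2}$. The adjacency rules translate to the following two conditions: $M$ must be a matching in $K_n$, and $A$ must be disjoint from the vertex set $V(M)$ of $M$. Hence
\[
|I| = |A| + |M| \le (n - 2|M|) + |M| = n - |M| \le n,
\]
with equality forcing $M = \emptyset$ and $A = [n]$. This shows $\alpha_n(2) = n$ and that $\{x_1^2,\dots,x_n^2\}$ is the unique maximum independent set, so $\a_n(2)=1$. No step here is a real obstacle; the only thing to be careful about is correctly enumerating adjacencies between the square and mixed vertices.
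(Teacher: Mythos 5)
Your proposal is correct and follows essentially the same route as the paper: the $d=0$ and $d=1$ cases are handled identically, and for $d=2$ the paper likewise observes that the mixed monomials in an independent set form a matching $M$ disjoint from the chosen squares, giving the bound $|I| \le n - |M|$ and forcing $M=\emptyset$ for a maximum independent set. Your writeup is if anything slightly more explicit about the adjacency rules and the uniqueness conclusion, but the argument is the same.
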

\begin{proof}
We have that $\a_n(0) = 1$ since $G_n(0)$ is a single vertex.
Also, $\a_n(1) = n$ since $G_n(1)$ is the complete graph $K_n$.
For $n = 2$ we claim that $\a_n(2) = 1$ and that
\[\{x_i^2 : 1 \leq i \leq n\}\]
is the unique maximum independent set.
Indeed consider an independent set $I$ which contains the vertices
\[x_{i_1}x_{j_1}, x_{i_2}x_{j_2}, \dots, x_{i_r}x_{j_r}\]
for $i_s \neq j_s$.
It must be that $r \leq \tfrac{n}{2}$ since we are considering an independent set.
Furthermore, letting $A = \{i_s, j_s | 1 \leq s \leq r\}$ we must have $|A| = 2r$.
The independent set can then only contain $x_i^2$ for $i \not\in A$.
So, the size of $I$ can be at most $r + (n - 2r) = n - r$.

\end{proof}

\subsection{Domination}
We now discuss the relationship of our work with domination.
A \emph{$k$-dominating} set is a set vertices $D$ such that every vertex not in the set is adjacent to at least $k$ vertices in $D$.
A usual dominating set is then a $1$-dominating set.
Hopkins and Stanton~\cite{HS} characterized trees with a unique maximum independent set as well as graphs with a unique maximum independent set for which complement of this maximum independent set is also an independent set.
Further work in this direction was done by Siemes, Topp, and Volkmann~\cite{STV} by considering \emph{$k$-independent sets} for any $k$ which are independent sets $I$ such that any independent set $I'$ with $|I'| \geq |I| - (k-1)$ is must be a subset of $I$.
Hence, the notion of a unique maximum independent is recovered for $k=1$.
Certainly any maximal independent set will be a dominating set, but it is known that any $k$-independent set is actually a $(k+1)$-dominating set~\cite[Corollary 2]{STV}.
So, we obtain the following corollary.

\begin{corollary}
The unique maximal independent sets constructed in the proofs of Theorem~\ref{thm:triangle} and Theorem~\ref{thm:4} are $2$-dominating sets.
\end{corollary}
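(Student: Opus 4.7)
The plan is to derive the corollary as an immediate consequence of the framework of Siemes, Topp, and Volkmann~\cite{STV} that was already introduced in the preceding paragraph. The first step is to observe that a unique maximum independent set $I$ in a graph $G$ is precisely a $1$-independent set in the sense of~\cite{STV}: taking $k = 1$ in their definition, any independent set $I'$ with $|I'| \geq |I| - (1-1) = |I|$ must by maximality satisfy $|I'| = |I|$, and then uniqueness forces $I' = I \subseteq I$. This check is essentially a tautology but is worth recording explicitly, since the corollary's whole content lies in translating between the two languages.

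The second step is simply to invoke~\cite[Corollary 2]{STV}, which says that every $k$-independent set is a $(k+1)$-dominating set. Specializing to $k = 1$ yields that every unique maximum independent set is a $2$-dominating set. Applying this to the sets produced by Theorem~\ref{thm:triangle} (in $G_3(d)$ for $d \neq 6$ with $d \equiv 0 \pmod 3$) and Theorem~\ref{thm:4} (in $G_4(d)$ for $d \equiv 0 \pmod 2$) gives the stated conclusion.

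There is no genuine obstacle: all of the combinatorial work has already been carried out in the earlier theorems, which established not merely the existence of these maximum independent sets but also their uniqueness. The only small point of caution is the indexing convention in~\cite{STV}, namely that being $k$-independent yields $(k+1)$-domination rather than $k$-domination, so the shift by one needs to be applied correctly. Since the argument is so short, in the actual write-up I would simply state the two sentences above inline rather than structuring the proof into separate steps.
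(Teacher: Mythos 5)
Your argument is correct and is exactly the paper's: the paper derives the corollary in the sentence immediately preceding it by noting that a unique maximum independent set is a $1$-independent set in the sense of Siemes, Topp, and Volkmann and then citing their result that every $k$-independent set is a $(k+1)$-dominating set. Your explicit verification that uniqueness plus maximality gives the $1$-independence condition is the same (essentially tautological) check the paper leaves implicit.
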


It can be seen from Figure~\ref{fig:G_3(12)} and Figure~\ref{fig:slices} that the independent sets from Theorem~\ref{thm:triangle} and Theorem~\ref{thm:4} are not $3$-dominating, and hence also they are not $2$-independent.
Volkmann~\cite{Volk} has studied further connections between unique maximum independent sets and $2$-dominating sets.

Recall, the complete bipartite graph $K_{1,r}$ has vertex set $\{a_1\} \cup \{b_i : 1 \leq i \leq r\}$ and edge set $\{\{a_1, b_i\} : 1 \leq i \leq r\}$.
A graph is $K_{1,r}$-free if it does not contain an induced subgraph isomorphic to $K_{1,r}$.

\begin{proposition}
The graph $G_n(d)$ is $K_{1,r}$-free if and only if $d < r$ or $n < r$.
\label{prop:K1r}
\end{proposition}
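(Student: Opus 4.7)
The plan is to translate being $K_{1,r}$-free into a local condition at each vertex and then dispatch both directions by simple counting. Since $K_{1,r}$ has a unique vertex of degree $r$, an induced $K_{1,r}$ in $G_n(d)$ corresponds precisely to a vertex $f$ together with $r$ pairwise non-adjacent neighbors of $f$. Using the Manhattan-distance reformulation of $G_n(d)$ given in the introduction, the neighbors of $f = x_1^{e_1} \cdots x_n^{e_n}$ are exactly the monomials $g_{i,j} = x_j f/x_i$ indexed by pairs $(i,j)$ with $i \in \mathrm{supp}(f)$ (so $e_i \geq 1$) and $j \neq i$.

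The next step is a short case check for when two such neighbors $g_{i_1,j_1}$ and $g_{i_2,j_2}$ are themselves adjacent. Their exponent vectors differ by $-e_{i_1} + e_{j_1} + e_{i_2} - e_{j_2}$, whose $\ell^1$-norm is $2$ precisely when exactly one of $i_1 = i_2$ or $j_1 = j_2$ holds, and is $4$ in the remaining cases (distinct but overlapping or completely disjoint index sets). Hence a collection of $r$ pairwise non-adjacent neighbors of $f$ is equivalent to a collection of index pairs $\{(i_a, j_a)\}_{a=1}^{r}$ satisfying $i_a \in \mathrm{supp}(f)$, $j_a \neq i_a$, all $i_a$ pairwise distinct, and all $j_a$ pairwise distinct.

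Both directions of the proposition follow immediately. If $n < r$, we cannot choose $r$ pairwise distinct $j_a$ from $\{1,\ldots,n\}$; if $d < r$, then $|\mathrm{supp}(f)| \leq d < r$, so we cannot choose $r$ pairwise distinct $i_a \in \mathrm{supp}(f)$. In either case $G_n(d)$ is $K_{1,r}$-free. Conversely, when $n \geq r$ and $d \geq r$, take $f = x_1^{d-r+1} x_2 x_3 \cdots x_r$, which has support $\{1,2,\ldots,r\}$, together with the cyclic pairs $(i_a, j_a) = (a,\, (a \bmod r) + 1)$ for $a = 1, \ldots, r$. These pairs satisfy all four conditions, so $\{g_{i_a, j_a}\}_{a=1}^r$ together with $f$ induces a $K_{1,r}$ in $G_n(d)$.

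The only nontrivial content is the adjacency criterion for two neighbors of $f$, and this reduces to enumerating the cases in which the difference $-e_{i_1} + e_{j_1} + e_{i_2} - e_{j_2}$ has $\ell^1$-norm $2$; since there are only a handful of coincidence patterns among the four indices to consider, I do not anticipate any real obstacle.
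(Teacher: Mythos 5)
Your proof is correct and takes essentially the same route as the paper: both index the neighbors of a vertex $f$ by pairs (divided variable, multiplied variable), argue the forward direction by pigeonhole on these indices (the paper pigeonholes only on the divided variable, which lies in the support of size at most $\min(n,d)$, while you split the two hypotheses between the two coordinates), and exhibit the identical witness $f = x_1^{d-r+1}x_2\cdots x_r$ with cyclic shifts for the converse. Your adjacency criterion (two distinct neighbors are non-adjacent iff both their divided and their multiplied variables differ) is a slightly more complete version of the paper's observation, and it checks out, so there is no gap.
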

\begin{proof}
Assume that $d < r$ or $n < r$.
Consider a vertex $v_0$ which we may assume has $r$ neighbors $v_1, v_2, \dots v_r$.
Let 
\[v_i = \left(\frac{x_{i_1}}{x_{i_2}}\right)v_0\]
for some $i_1 \neq i_2$ such that $x_{i_2}$ divides $v_0$.
Since $d < r$ or $n < r$ there must be $v_i$ and $v_j$ with $i_2 = j_2$ but $i_1 \neq j_1$.
Then
\[\lcm(v_i, v_j) = \left(\frac{x_{i_1}x_{j_1}}{x_{i_2}} \right) v_0\]
which has degree $d+1$.
Hence $\{v_i, v_j\}$ is an edge.
It follows that in this case $G_n(d)$ must be $K_{1,r}$-free.

Now assume $d \geq r$ and $n \geq r$.
Take the vertex $v_0 = x_1^{d-r+1}x_2 \cdots x_r$.
Let
\[v_i = \left(\frac{x_{i}}{x_{i+1}}\right)v_0\]
for $1 \leq i \leq r-1$
and 
\[v_r = \left(\frac{x_{r}}{x_{1}}\right)v_0\]
Then we have that $G_n(d)|_X$ is an induced subgraph isomorphic to $K_{1,r}$ since
\begin{align*}
\lcm(v_0, v_i) &= x_i v_0 & \lcm(v_i, v_j) &= x_ix_jv_0
\end{align*}
for $1 \leq i, j \leq r$ with $i \neq j$.
Hence, in this case $G_n(d)$ is contains a copy of $K_{1,r}$ and the proposition is proven.
\end{proof}

The \emph{domination number} (i.e. minimal possible size of dominating set) and  \emph{independent domination number} (i.e. minimal possible size of dominating set which is also an independent set) of a graph $G$ is denoted by $\gamma(G)$ and $i(G)$ respectively.
We denote the domination number of $G_n(d)$ by $\gamma_n(d)$.
Similarly we let $i_n(d)$ denote the independent domination number of $G_n(d)$.
It is clear that $\gamma(G) \leq i(G)$ for any $G$.
Allan and Laskar proved that $i(G) = \gamma(G)$ when $G$ is $K_{1,3}$-free~\cite{K13}.
More generally, Bollob\'{a}s and Cockayne~\cite{indom} have shown that $i(G) \leq \gamma(G) (r-1) - (r-2)$ whenever $G$ is $K_{1,r+1}$-free.Hence, Proposition~\ref{prop:K1r} gives a bound between $i_n(d)$ and $\gamma_n(d)$.
However, we conjecture that more is true and that they in fact equal.

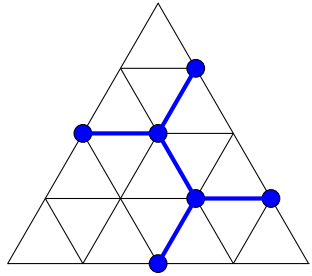
\begin{figure}
\centering
\begin{tikzpicture}
\begin{scope}[shift={(10,0)}]
\newcommand*\rows{4}
\foreach \row in {0, 1, ...,\rows} {
        \draw ($\row*(0.5, {0.5*sqrt(3)})$) -- ($(\rows,0)+\row*(-0.5, {0.5*sqrt(3)})$);
        \draw ($\row*(1, 0)$) -- ($(\rows/2,{\rows/2*sqrt(3)})+\row*(0.5,{-0.5*sqrt(3)})$);
        \draw ($\row*(1, 0)$) -- ($(0,0)+\row*(0.5,{0.5*sqrt(3)})$);
    }
   \node[draw, circle, fill=blue!, scale=0.65] at (2,0) {};
   \node[draw, circle, fill=blue!, scale=0.65] at (2.5,{0.5*sqrt(3)}) {};
   \node[draw, circle, fill=blue!, scale=0.65] at (3.5,{0.5*sqrt(3)}) {};
   \node[draw, circle, fill=blue!, scale=0.65] at (2,{sqrt(3)}) {};
   \node[draw, circle, fill=blue!, scale=0.65] at (1,{sqrt(3)}) {};
   \node[draw, circle, fill=blue!, scale=0.65] at (2.5,{1.5*sqrt(3)}) {};

\draw[ultra thick,blue] (2,0) -- (2.5,{0.5*sqrt(3)}) -- (3.5,{0.5*sqrt(3)});
\draw[ultra thick,blue] (2.5,{0.5*sqrt(3)}) --(2,{sqrt(3)});
\draw[ultra thick,blue] (2.5,{1.5*sqrt(3)}) --(2,{sqrt(3)}) -- (1,{sqrt(3)}) ;
\end{scope}
\end{tikzpicture}
\label{fig:notperfect}
\caption{An induced subgraph showing $G_3(4)$ is not domination perfect.}
\end{figure}

\begin{conjecture}
If $n \geq 1$ and $d \geq 0$, then $i_n(d) = \gamma_n(d)$.
\label{conj:igamma}
\end{conjecture}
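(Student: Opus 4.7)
The plan is to show that for every $n$ and $d$ there exists a minimum dominating set of $G_n(d)$ that happens to be independent. The natural technique is a swap argument: among all minimum dominating sets $D$ of $G_n(d)$, choose one that minimizes the number of edges within $G_n(d)|_D$. If that count is zero we are done, so toward a contradiction suppose some edge $\{u,v\}$ lies inside $D$.

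Because $D$ is minimum, each of $u$ and $v$ has a \emph{private neighbor} in $V \setminus D$, that is, a vertex whose only neighbor in $D$ is $u$ (respectively $v$). In $G_n(d)$ neighbors of a monomial $f$ correspond to exchanging one unit of one exponent for a unit of another, which gives these private neighbors an explicit combinatorial description. I would then seek a vertex $w \notin D$, non-adjacent to $v$, such that $(D \setminus \{u\}) \cup \{w\}$ remains dominating and has strictly fewer internal edges, contradicting the choice of $D$. The fact that pairs of adjacent monomials share many common neighbors of a highly structured form should leave substantial room for the choice of $w$, at least when the restriction of $D$ near $\{u,v\}$ is not highly constrained.

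Before the general argument, I would secure several footholds. The cases $n = 1, 2$ are immediate from the simple structure of $G_1(d)$ (a point) and $G_2(d)$ (a path). For $n = 3, 4$ and small $d$ I would run a computer search both to verify the conjecture and to collect sample minimum independent dominating sets, which often suggest the correct canonical construction. Since $S_n$ acts on $G_n(d)$ by permuting variables, it is natural to restrict attention to dominating sets with large automorphism group; producing $S_n$-invariant families from scratch may directly yield the upper bound $i_n(d) \leq \gamma_n(d)$ in many cases, bypassing the swap lemma entirely.

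The main obstacle I anticipate is exactly what the figure exhibiting a non-domination-perfect induced subgraph of $G_3(4)$ warns against: $G_n(d)$ is not domination perfect, so a swap argument cannot be localized to an induced subgraph and must draw on global properties of the graph. Concretely, there will be configurations where every immediate replacement $w$ for $u$ either fails to dominate one of $u$'s private neighbors or is adjacent to another element of $D$, and only a coordinated swap of several vertices of $D$ at once can break the deadlock. Formulating a bounded-radius \emph{reduction move} that provably decreases the internal edge count of any non-independent minimum dominating set, and proving that no minimum dominating set is stable under this move, looks to me like the heart of the problem.
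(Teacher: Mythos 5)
This statement is Conjecture~\ref{conj:igamma} in the paper; it is posed as an open problem and the paper offers no proof of it beyond the trivial cases $n=1,2$, so there is no ``paper proof'' to compare against. What you have written is a research plan rather than a proof, and by your own admission the central step is missing. The swap argument you describe --- replacing a vertex $u$ of a minimum dominating set $D$ by a vertex $w$ outside $D$ so as to strictly decrease the number of edges inside $D$ --- is exactly the Allan--Laskar argument that proves $i(G)=\gamma(G)$ for $K_{1,3}$-free graphs. But Proposition~\ref{prop:K1r} of the paper shows $G_n(d)$ contains an induced $K_{1,r}$ whenever $n\geq r$ and $d\geq r$, so for $n\geq 3$ and $d\geq 3$ these graphs are not claw-free and the single-vertex swap is not guaranteed to exist: $u$ may have several private neighbors that form an independent set among themselves, and no single $w$ can dominate all of them while avoiding adjacency to the rest of $D$. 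The Bollob\'as--Cockayne bound cited in the paper only yields $i(G) \leq \gamma(G)(r-1)-(r-2)$ for $K_{1,r+1}$-free graphs, which is strictly weaker than equality for $r\geq 2$. Your paragraph acknowledging that ``only a coordinated swap of several vertices of $D$ at once can break the deadlock'' identifies the real difficulty but does not supply the move, prove it terminates, or prove it preserves domination; that is the entire content of the conjecture.

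The secondary suggestions (computer search for small cases, $S_n$-invariant constructions giving explicit independent dominating sets of size $\gamma_n(d)$) are reasonable directions, but note that no closed formula for $\gamma_n(d)$ is known for $n\geq 3$ --- the paper only cites a conjectural formula of Wagon for $n=3$ --- so you cannot certify that a constructed independent dominating set is minimum without independently establishing the lower bound on $\gamma_n(d)$. As it stands the proposal contains no step that could be checked for correctness; the conjecture remains open.
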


It is easy to see that Conjecture~\ref{conj:igamma} holds for $n =1$ and $n=2$ with any $d$.
For $n=3$ it is conjectured by Wagon~\cite{chess} that 
\[\gamma_3(d) = \	\left\lfloor \frac{d^2+7d-23}{14} \right\rfloor\]
for any $d \geq 14$.
To our knowledge neither $\gamma_n(d)$ nor $i_n(d)$ has a proven formula for $n \geq 3$.
An upper bound of $\gamma_3(d)$ is provided in~\cite{chessBound}, but this upper bound is not tight.

A graph is called \emph{domination perfect} if $\gamma(H) = i(H)$ for every induced subgraph $H$.
This is a stronger property than what we have in Conjecture~\ref{conj:igamma}, and this strong property does not hold.
\begin{proposition}
For $n \geq 3$ and $d \geq 4$ the graph $G_n(d)$ is not domination perfect.
\end{proposition}
The proof of the proposition can be seen by considering $H$ to be the induced subgraph on
\[X = \{x_1^3x_3, x_1^2x_2^2, x_1^2x_2x_3,, x_1x_2x_3^2, x_1x_3^3, x_2^2x_3^2\}\]
which is shown in Figure~\ref{fig:notperfect} and has $\gamma(H) =2$ and $i(H) = 3$.
We note this induced subgraph $H$ is one of the 13 graphs forbidden graphs in Topp and Volkmann's sufficient condition for $i(G) = \gamma(G)$~\cite{equaldom}; hence, this result cannot be applied to prove the conjecture.

\bibliographystyle{alpha}
\bibliography{refs}

\begin{thebibliography}{CHVT01}

\bibitem[AL78]{K13}
Robert~B. Allan and Renu Laskar.
\newblock On domination and independent domination numbers of a graph.
\newblock {\em Discrete Math.}, 23(2):73--76, 1978.

\bibitem[BC79]{indom}
B.~Bollob\'{a}s and E.~J. Cockayne.
\newblock Graph-theoretic parameters concerning domination, independence, and
  irredundance.
\newblock {\em J. Graph Theory}, 3(3):241--249, 1979.

\bibitem[BVT13]{BVT2013}
Ben Babcock and Adam Van~Tuyl.
\newblock Revisiting the spreading and covering numbers.
\newblock {\em Australas. J. Combin.}, 56:77--84, 2013.

\bibitem[CHVT01]{EHVT2001}
Enrico Carlini, Huy~T\`ai H\`a, and Adam Van~Tuyl.
\newblock Computing the spreading and covering numbers.
\newblock {\em Comm. Algebra}, 29(12):5687--5699, 2001.

\bibitem[Cur95]{Curtis}
F.~J. Curtis.
\newblock A combinatorial problem involving monomial ideals.
\newblock {\em J. Pure Appl. Algebra}, 104(2):161--167, 1995.

\bibitem[DT13]{chessBound}
Joe DeMaio and Hong~Lien Tran.
\newblock Domination and independence on a triangular honeycomb chessboard.
\newblock {\em College Math. J.}, 44(4):307--314, 2013.

\bibitem[GGR86]{GGR1986}
A.~V. Geramita, D.~Gregory, and L.~Roberts.
\newblock Monomial ideals and points in projective space.
\newblock {\em J. Pure Appl. Algebra}, 40(1):33--62, 1986.

\bibitem[HS85]{HS}
Glenn Hopkins and William Staton.
\newblock Graphs with unique maximum independent sets.
\newblock {\em Discrete Math.}, 57(3):245--251, 1985.

\bibitem[HW99]{3and4}
Heather Hulett and Todd~G. Will.
\newblock Generating monomials in dimensions three and four.
\newblock {\em J. Pure Appl. Algebra}, 138(2):139--150, 1999.

\bibitem[{OEI}]{OEIS}
{OEIS Foundation Inc.}
\newblock {\em The On-Line Encyclopedia of Integer Sequences}.
\newblock published electronically at https://oeis.org.

\bibitem[Rob86]{28pts}
Leslie~G. Roberts.
\newblock The ideal generation conjecture for {$28$} points in {${\bf P}^3$}.
\newblock {\em Canad. J. Math.}, 38(5):1228--1238, 1986.

\bibitem[STV94]{STV}
Werner Siemes, Jerzy Topp, and Lutz Volkmann.
\newblock On unique independent sets in graphs.
\newblock {\em Discrete Math.}, 131(1-3):279--285, 1994.

\bibitem[TV91]{equaldom}
Jerzy Topp and Lutz Volkmann.
\newblock On graphs with equal domination and independent domination numbers.
\newblock {\em Discrete Math.}, 96(1):75--80, 1991.

\bibitem[Vol04]{Volk}
Lutz Volkmann.
\newblock On perfect and unique maximum independent sets in graphs.
\newblock {\em Math. Bohem.}, 129(3):273--282, 2004.

\bibitem[Wag14]{chess}
Stan Wagon.
\newblock Graph theory problems from hexagonal and traditional chess.
\newblock {\em College Math. J.}, 45(4):278--287, 2014.

\end{thebibliography}

\end{document}